\documentclass[11pt]{amsart}
\usepackage{psfrag,epsfig}
\usepackage{hyperref}
\numberwithin{figure}{section}
\newtheorem{theorem}{Theorem}[section]
\newtheorem{prop}[theorem]{Proposition}
\theoremstyle{definition}
\newtheorem{definition}[theorem]{Definition}
\newtheorem{example}[theorem]{Example}
\theoremstyle{remark}
\newtheorem{remark}[theorem]{Remark}
\numberwithin{equation}{section}

\def \H{{\mathbb H}}
\def \R{{\mathbb R}}
\def \Z{{\mathbb Z}}
\def \Q{{\mathbb Q}}
\def \C{{\mathbb C}}

\begin{document}

\title{On the frieze patterns by Conway and Coxeter}
\author{Anna Felikson,  Pavel Tumarkin}
\address{Department of Mathematical Sciences, Durham University, Mathematical Sciences \& Computer Science Building, Upper Mountjoy Campus, Stockton Road, Durham, DH1 3LE, UK}
\email{anna.felikson@durham.ac.uk, pavel.tumarkin@durham.ac.uk}

 \maketitle

The story of frieze patterns started in $1970$s with a paper by Coxeter~\cite{C}  and two short papers by Conway and Coxeter~\cite{CC}, one listing questions and the other providing brief answers. During the first three  decades it was followed by only a handful of papers observing some further properties of the same objects. Then Propp~\cite{Pr} and Caldero and Chapoton~\cite{CaCh} established connections between frieze patterns and the recent theory of cluster algebras introduced by Fomin and Zelevinsky in early $2000$s. Since then the theory of frieze patterns exploded with new results, and frieze patterns found a wide spectrum of connections in various domains of mathematics, including combinatorics, geometry, representation theory and integrable systems. The literature on frieze patterns grew vastly and continues to expand.

In this note we give a brief overview of some of the directions emerging from~\cite{CC} and single out core ideas which were later replayed in different contexts.

\section{Conway -- Coxeter's frieze patterns: tables of numbers and triangulated polygons}
\label{sec-comb}

\subsection{What is a frieze pattern?}

A {\em frieze pattern of width $n$} is a grid of real numbers as in Fig.~\ref{fr-ex} with $n+2$ rows\footnote{In the original Coxeter's definition a frieze pattern of width $n$ is called a {\em frieze pattern of order $n+3$}. Also, Coxeter's frieze patterns have two additional rows of zeros, one above and one below, which we omit.}, where the first and last rows
consist of $1$'s and the four numbers in every small diamond $a$\put(3,5){$b$}\put(3,-6){$c$} \quad  $d$  satisfy the  {\it diamond
  rule}: $ad - bc = 1$.

%\smallskip

We are mostly interested in {\em  positive integral} frieze patterns, i.e. ones with all entries being positive integers, see Fig.~\ref{fr-ex} for an example.

\begin{figure}[!h]
\begin{center}
  \epsfig{file=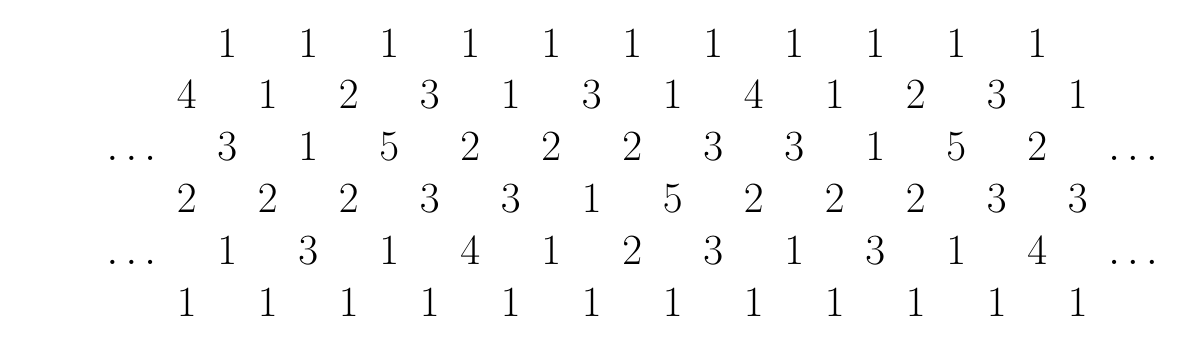,width=0.69\linewidth}
  \caption{Frieze pattern of width $4$.}
\label{fr-ex}
\end{center}
\end{figure}

Frieze patterns satisfy a number of nice properties, some of which can be seen in Fig.~\ref{fr-ex}, for example periodicity and glide symmetry (which is the reason for the name).  Many of the properties were observed already in~\cite{C},  where they were proved based on continued fractions, determinantal identities and difference equations. We will follow Conway and Coxeter~\cite{CC} and discuss frieze patterns in terms of triangulated polygons.

\subsection{How to build a positive integral frieze pattern?}
\label{build}

A positive integral frieze pattern of width $n$ can be constructed from a triangulation of a convex $(n+3)$-gon.   We present two ways to construct a positive integral frieze pattern, both based on the work of Conway and Coxeter~\cite{CC} and leading to the same result.

\subsubsection{Building a frieze pattern from a triangulation: quiddity sequence}
\label{quid}

Consider a triangulation $T$ of an $(n+3)$-gon $P_0\dots P_{n+2}$. Denote by $\alpha_i$ the number of triangles of $T$ containing vertex $P_i$, and extend the sequence $\alpha_0,\dots,\alpha_{n+2}$ periodically to a bi-infinite sequence by defining $\alpha_i=\alpha_{i+(n+3)}$. Now, put the sequence $(\alpha_i)$ as the first non-trivial row of a frieze pattern (this row is also called {\em quiddity sequence}). Due to the diamond rule, the second row is uniquely defined, and applying the diamond rule iteratively we can compute all further rows. 

It was proved by Conway and Coxeter~\cite[Problems (27),(28)]{CC} that the procedure above results in a  positive integral frieze pattern of width $n$ (we will explain the reasons for that in the next section). Moreover, all  positive integral frieze patterns can be constructed in this way (see Theorem~\ref{thm-CC}).  

\begin{figure}[!h]
\begin{center}
  \epsfig{file=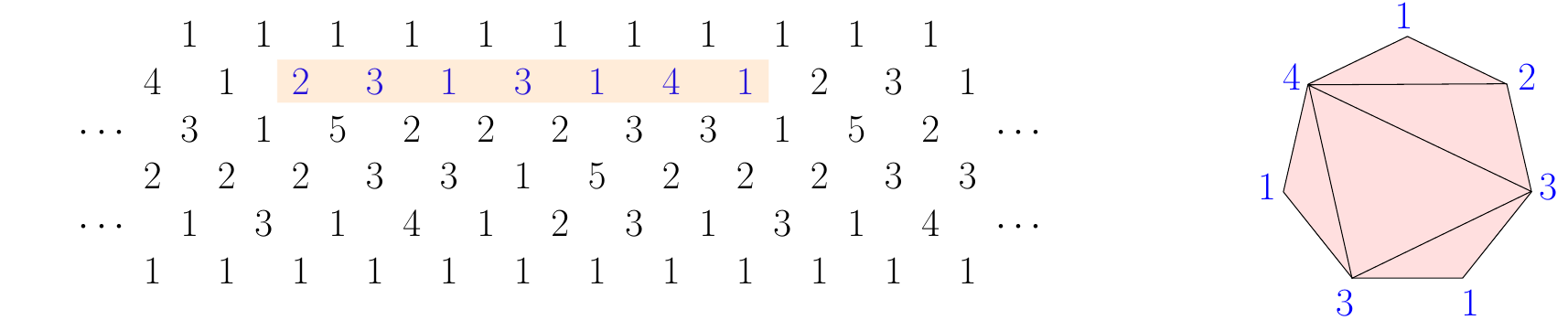,width=0.89\linewidth}
  \caption{Frieze pattern of width $4$ and the corresponding triangulated polygon. The period of the quiddity sequence is highlighted.}
\label{fr-ex-tr}
\end{center}
\end{figure}

\begin{remark}
  \label{match}
A combinatorial interpretation of the entries in the first row of the frieze pattern constructed above was extended to all entries in terms of matchings of vertices and triangles by Broline, Crowe and Isaacs, see~\cite[Theorem~1]{BCI}.  

\end{remark}

\subsubsection{Building a frieze pattern from a triangulation: Ptolemy relations}
\label{Pt}

In this section, we will first look at the polynomial relations between the lengths of sides and diagonals in a Euclidean polygon inscribed in a circle. We then use these relations to build a positive integral frieze pattern.

Consider a cyclic Euclidean $(n+3)$-gon $P$ with vertices $P_0,\dots,P_{n+2}$. Denote $u_{i,j}:=|P_iP_j|$ to be the length of a diagonal or a side with endpoints $P_i$ and $P_j$, where $i,j$ are considered modulo $n+3$, $u_{i,j}=u_{j,i}$. We can now arrange the numbers $u_{i,j}$ in a table as in Fig~\ref{uij}. Notice that the first and the last rows consist of the lengths of sides of $P$.

\begin{figure}[!h]
  \epsfig{file=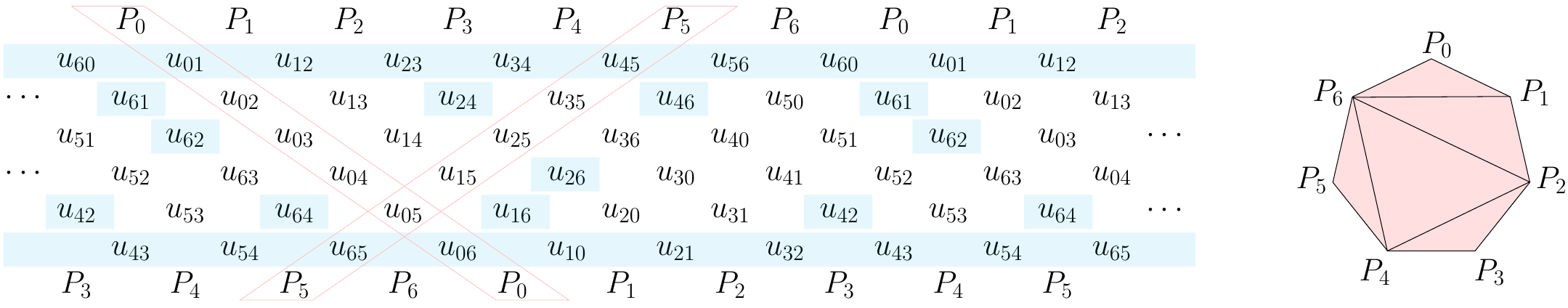,width=0.99\linewidth}  
\caption{Arranging the lengths of diagonals of an $(n+3)$-gon. Entry $u_{i,j}$ is located at the intersection of a line from $P_i$ to $P_i$ in the south-east direction and from $P_j$ to $P_j$ in the north-east direction. The entries corresponding to the sides and diagonals in the triangulation are highlighted.}
\label{uij}
\end{figure}

Observe that every quadrilateral $P_iP_jP_kP_l$ is cyclic and hence, its sides and diagonals satisfy the {\em Ptolemy relation}, see Fig.~\ref{fig-pt}.

\begin{theorem}[Ptolemy]
In a cyclic quadrilateral $ABCD$ the following relation holds: $$|AC|\cdot |BD|=|AB|\cdot|CD|+|BC|\cdot|AD|.$$
  
\end{theorem}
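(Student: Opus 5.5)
I plan to prove Ptolemy's theorem with the chord-length formula. Place $A,B,C,D$ in this cyclic order on a circle of radius $R$ and write the points as $R e^{i\theta_A},\dots,R e^{i\theta_D}$ with $\theta_A<\theta_B<\theta_C<\theta_D<\theta_A+2\pi$. For two points on the circle, the chord between them has length $|XY| = 2R\sin\frac{|\theta_X-\theta_Y|}{2}$. Since every angle difference used here lies in $(0,2\pi)$, each half-angle lies in $(0,\pi)$ and all sines are positive. Set
$$x=\tfrac{\theta_B-\theta_A}{2},\quad y=\tfrac{\theta_C-\theta_B}{2},\quad z=\tfrac{\theta_D-\theta_C}{2},$$
which are all positive with $x+y+z<\pi$. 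After dividing by $4R^2$, the claim becomes the identity
$$\sin(x+y)\sin(y+z)=\sin x\sin z+\sin y\sin(x+y+z).$$

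To check this identity, I will turn each product into a difference of cosines using $2\sin u\sin v=\cos(u-v)-\cos(u+v)$.

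The left-hand side becomes $\cos(x-z)-\cos(x+2y+z)$.

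The right-hand side becomes $\cos(x-z)-\cos(x+z)+\cos(x+z)-\cos(x+2y+z)$. The middle terms cancel, leaving the same expression as the left-hand side.

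The only point that needs care is the bookkeeping: the diagonals $AC$ and $BD$ must correspond to the half-angles $x+y$ and $y+z$, and the side $AD$ must correspond to $x+y+z$. For $AD$, the chord formula could use either the angle $x+y+z$ or the complementary angle $\pi-(x+y+z)$. Both give the same value, since $\sin(\pi-t)=\sin t$. This is where the cyclic order is used: it guarantees that $AC$ and $BD$ are the diagonals, so the relation holds in exactly the stated form.

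An alternative would be Ptolemy's inequality, proved by inversion centred at $A$: inversion sends $B,C,D$ to collinear points $B',C',D'$, and then $|B'C'|+|C'D'|=|B'D'|$ translates, via $|X'Y'|=\frac{r^2|XY|}{|AX||AY|}$, into the desired relation. The trigonometric route is fully self-contained, so I would present that one. I do not expect a genuine obstacle here.
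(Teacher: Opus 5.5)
Your proof is correct and complete. With all half-angles in $(0,\pi)$, every chord equals $2R$ times a positive sine, so the six lengths are $2R$ times $\sin x$, $\sin y$, $\sin z$, $\sin(x+y)$, $\sin(y+z)$ and $\sin(x+y+z)$. Both sides of the reduced identity then equal $\frac12\bigl(\cos(x-z)-\cos(x+2y+z)\bigr)$.

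The paper gives no proof of this statement. It quotes Ptolemy's theorem as a classical fact to motivate the Ptolemy relations, and then passes to Penner's hyperbolic version for lambda lengths. So there is no argument of the paper to compare yours with.

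One remark connects your route to the rest of the paper. Put $\phi=\theta/2$. Then $\sin(\phi_j-\phi_i)=\det\begin{pmatrix}\cos\phi_i&\cos\phi_j\\ \sin\phi_i&\sin\phi_j\end{pmatrix}$. Consider the $2\times 4$ matrix with columns $(\cos\phi_X,\sin\phi_X)$ for $X=A,B,C,D$. Its $2\times 2$ minors $\Delta_{XY}$, with $X$ before $Y$, are all positive because $\phi_A<\phi_B<\phi_C<\phi_D<\phi_A+\pi$. Your trigonometric identity is exactly the three-term Pl\"ucker relation $\Delta_{AC}\Delta_{BD}=\Delta_{AB}\Delta_{CD}+\Delta_{AD}\Delta_{BC}$ for this matrix.

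The same identity, applied to columns $(p,q)$ representing reduced fractions in cyclic order, gives the hyperbolic Ptolemy relation for the Ford-circle lambda lengths $|ps-qr|$ in the Farey-graph section. This is one way to see why chord lengths in a circle and hyperbolic lambda lengths satisfy the same relations.
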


\begin{figure}[!h]
  \epsfig{file=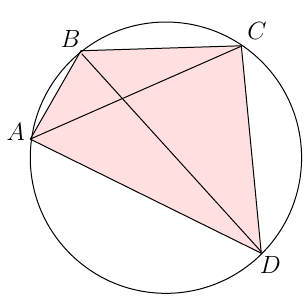,width=0.2\linewidth}  
\caption{Ptolemy relation: $|AC|\cdot |BD|=|AB|\cdot|CD|+|BC|\cdot|AD|$.}
\label{fig-pt}
\end{figure}

Now, choose any triangulation $T$ of $P$, it contains precisely $n$ diagonals. Let $l_1,\dots,l_n$ be the lengths of the diagonals in $T$. % (i.e. $l_k=u_{i,j}$ for some $i,j$). 
Then the length $u_{i,j}$ of every diagonal $P_iP_j$ can be expressed through $l_1,\dots,l_n$ and the lengths of sides $u_{i,i+1}$, $i=0,\dots, n+2$, by applying the Ptolemy relations several times:

\begin{prop}[Laurent phenomenon in a polygon]
  \label{laurent}
If $T$ is a triangulation of $P$,  $l_1,\dots,l_n$ are the lengths of diagonals in the triangulation $T$, then    
 $$u_{i,j}=Q_{i,j}/\prod l_r ,$$
 where the numerator $Q_{i,j}$ is a polynomial with positive integer coefficients in  $l_1,\dots,l_n$ and $u_{k,k+1}$, $k=0,\dots, n+2$,  and the denominator is the product of all lengths $l_r$ of the diagonals in $T$ intersected by $P_iP_j$.

\end{prop}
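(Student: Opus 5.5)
We argue by induction on the number $k$ of diagonals of $T$ crossed by $P_iP_j$. If $k=0$, then $P_iP_j$ is a side of $P$ or a diagonal of $T$. In that case $u_{i,j}$ is itself one of the variables and the empty denominator gives the claim. The geometric input is this: the triangles of $T$ crossed by $P_iP_j$ form a triangulated sub-polygon $P'$ (a ``snake'' of triangles). The triangulation of $P'$ is a fan or zigzag whose internal diagonals are exactly the $k$ diagonals $d_1,\dots,d_k$ crossed by $P_iP_j$, listed in the order they are crossed going from $P_i$ to $P_j$. All sides of $P'$ are sides of $P$ or diagonals of $T$ not crossed by $P_iP_j$. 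So we may work inside $P'$, whose vertices are still concyclic, and prove the statement there.

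\emph{Inductive step.} Let $d_1=P_aP_b$ be the first crossed diagonal, so $P_iP_aP_b$ is a triangle of $T$. Apply the Ptolemy relation to the cyclic quadrilateral $P_iP_aP_jP_b$, whose diagonals are $P_iP_j$ and $P_aP_b$:
\[
u_{i,j}\,l_{d_1}=u_{i,a}\,u_{b,j}+u_{i,b}\,u_{a,j}.
\]
Here $u_{i,a}$ and $u_{i,b}$ are sides of triangles of $T$, so they are plain variables. The diagonal $P_aP_j$ crosses only a subset of $d_2,\dots,d_k$: it stays in the part of $P'$ beyond $d_1$, and it crosses precisely those $d_r$ ($r\ge 2$) that separate $P_a$ from $P_j$. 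The same holds for $P_bP_j$. Also, $d_1$ and the crossed sets of $P_aP_j$ and $P_bP_j$ are disjoint. By induction each of $u_{a,j}$ and $u_{b,j}$ is a positive polynomial divided by the product of the diagonals it crosses. Hence, multiplying both terms up to the common denominator $\prod_{r\ge2}l_{d_r}$, we get
\[
u_{i,j}=\frac{\text{positive polynomial}}{l_{d_1}\prod_{r\ge2}l_{d_r}}.
\]
The common denominator works because each diagonal $d_r$ with $r\ge2$ separates $P_j$ from at least one of $P_a$, $P_b$. So the product of the diagonals crossed by $P_aP_j$, and likewise that of $P_bP_j$, divides $\prod_{r\ge 2}l_{d_r}$, and the missing factors are multiplied into the numerators, which keeps the coefficients positive integers.

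The main obstacle is the denominator claim: it must be \emph{exactly} the product of the crossed diagonals, with no extra factors. The argument above only yields a numerator with positive integer coefficients over that product, and that is precisely what the statement asserts. It does not assert that the fraction is reduced, so no cancellation argument is needed. The real care goes into the combinatorial bookkeeping. One must check that $P_aP_j$ and $P_bP_j$ cross only diagonals among $d_2,\dots,d_k$, which holds because they lie in the region cut off by $d_1$ containing $P_j$. One must also check that the induction is well-founded, which holds because $P_aP_j$ and $P_bP_j$ each cross strictly fewer than $k$ diagonals. Degenerate cases where $P_a=P_j$ or $P_b=P_j$ occur only when $k=1$, and there the relation directly gives $u_{i,j}=(u_{i,a}u_{b,j}+u_{i,b}u_{a,j})/l_{d_1}$ with all terms being variables.
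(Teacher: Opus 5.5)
Your proof is correct and is essentially the paper's argument. Both induct on the number of diagonals of $T$ crossed by $P_iP_j$ and apply the Ptolemy relation in the quadrilateral formed by $P_i$, $P_j$ and the endpoints of a crossed diagonal. The paper allows any crossed diagonal $P_sP_t$; your choice of the first one, $d_1$, makes $u_{i,a},u_{i,b}$ plain variables and spells out the denominator bookkeeping that the paper leaves implicit. One small slip: $P_a=P_j$ (or $P_b=P_j$) can never occur, since $d_1$ meets $P_iP_j$ at an interior point. For $k=1$, what actually happens is that $P_aP_j$ and $P_bP_j$ are sides of a triangle of $T$, which your base case already covers.
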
 
In other words, all $u_{i,j}$ can be expressed as Laurent polynomials in $l_1,\dots,l_n$ and $u_{k,k+1}$.

%\bigskip
%\noindent
\subsubsection*{Idea of proof:}
The proof is by induction on the number of intersections of $P_iP_j$ with diagonals belonging to $T$. For diagonals of $P$ belonging to $T$ we have $u_{i,j}=l_k$ for some $k$. Now, assume  we constructed $Q_{u,v}$ for every diagonal $P_uP_v$ intersected  by at most $m-1$ diagonals lying in $T$. The polynomial $Q_{i,j}$ for a diagonal $P_iP_j$ intersecting $m$ diagonals of $T$ can be found as follows. If $P_sP_t$ is any diagonal of $T$ intersected by $P_iP_j$, then for all sides in the quadrilateral $P_iP_sP_jP_t$ the corresponding polynomials are already known, and applying the Ptolemy relation in this quadrilateral we obtain the required expression for $P_iP_j$.
\qed

%\medskip

\begin{remark}
  \label{well}
  The Laurent polynomials  $u_{i,j}=Q_{i,j}/\prod l_r$ can be computed explicitly, and their coefficients can be interpreted combinatorially, see~\cite{S}. Moreover,  all $u_{i,j}$ (as Laurent polynomials) satisfy Ptolemy relations. 
  
\end{remark}

\begin{remark}
  \label{cluster-l}
  Proposition~\ref{laurent} is a very particular case of the Laurent phenomenon for cluster algebras discovered by Fomin and Zelevinsky~\cite[Theorem~3.1]{FZ}, applied to cluster algebras of finite type $A_n$.   
  \end{remark}

We are now ready to construct a positive integral frieze pattern by specializing the values of the Laurent polynomials $u_{i,j}$ in Fig.~\ref{uij}.

\begin{itemize}
  \item
First, we set all $u_{k,k+1}$ to be equal to $1$. In this way, we see that every entry in the first and the last rows of the table is equal to $1$.

\item
Next, we set all $l_k=1$. Since all the denominators of all Laurent polynomials $u_{i,j}$ consist of products of $l_k$, and the polynomials in the numerators have positive integer coefficients, we see that the table is now filled with positive integers.

\item
Finally, consider a quadrilateral $P_iP_{i+1}P_jP_{j+1}$ for $|i-j|>1$. The corresponding Ptolemy relation $u_{i,j}u_{i+1,j+1}=u_{i+1,j}u_{j+1,i}+u_{i,i+1}u_{j,j+1}$ can be written as $u_{i,j}u_{i+1,j+1}-u_{i+1,j}u_{j+1,i}=1$, which is exactly the diamond rule for the diamond
$$\begin{matrix}
  &{u_{i+1,j}}&\\
  u_{i,j}&&{u_{i+1,j+1}}\\
  &{u_{i,j+1}}&
\end{matrix}$$
in the table. 
\end{itemize}

Summarizing, we constructed a table of positive integers satisfying the definition of a frieze pattern.

\begin{remark}
\label{no-Eucl}  
There is no Euclidean triangulated $(n+3)$-gon for $n>0$ satisfying the assumptions $u_{i,i+1}=1$ and $l_k=1$ for all $i$ and $k$ simultaneously. However, one can use hyperbolic geometry to give the values $u_{i,j}$ geometric meaning, see Section~\ref{sec-hyp}.

\end{remark}

\begin{remark}
  \label{quid-proof}
  It can be easily checked from the inductive reasoning above that the entry $u_{i,i+2}$ of the obtained frieze pattern, corresponding to the short diagonals, is equal to the number of triangles containing the vertex $P_{i+1}$. Thus, the entries $u_{i,i+2}$ compose a sequence identical to the quiddity sequence from the previous section (up to shift). This explains the construction in Section~\ref{quid}: it gives rise to the very same frieze pattern we constructed in the present section.

  \end{remark}

\subsection{Conway -- Coxeter's classification of positive integral frieze patterns}
\label{class}

In Section~\ref{build} we provided a construction of a positive integral frieze pattern of width $n$ from a triangulation of an $(n+3)$-gon. Conway and Coxeter proved that every positive integral frieze pattern can be obtained in the way described above. 

\begin{theorem}[\cite{CC}, Problems (28),(29)]
\label{thm-CC}  
Positive integral frieze patterns of width $n$ are in bijection with triangulations of an $(n+3)$-gon. 

More precisely,  for any positive integral frieze pattern of width $n$ there exists a unique triangulation $T$ of an $(n+3)$-gon $P_0\dots P_{n+2}$ such that $u_{i,j}=1$ whenever $P_iP_j$ belongs to $T$, where entries $u_{i,j}$ of the frieze pattern are labeled as in Fig.~\ref{uij}.

\end{theorem}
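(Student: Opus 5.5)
We prove the theorem by induction on the width $n$, using the classical ``cutting an ear'' argument of Conway and Coxeter on the quiddity sequence. The construction in Section~\ref{build} gives a map from triangulations to positive integral frieze patterns, and by Remark~\ref{quid-proof} the quiddity row of the frieze built from $T$ is the sequence $(\alpha_i)$ of triangle counts. Since the diamond rule determines the whole pattern from its first non-trivial row, a frieze pattern is determined by its quiddity sequence. So we must show two things. First, every quiddity sequence of a positive integral frieze of width $n$ comes from a triangulation of an $(n+3)$-gon. Second, a triangulation is recovered from its sequence $(\alpha_i)$ and is characterised by the condition $u_{i,j}=1$ on its diagonals.

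\textbf{Step 1: a positive integral frieze of width $n\ge 1$ has some $a_i=1$ in its quiddity row.} Label the entries as $u_{i,j}$, with $u_{i,i+1}=1$, quiddity row $a_{i+1}=u_{i,i+2}$, and last row $u_{i,i+n+2}=1$. Fix $i$ and consider the diagonal $d_k=u_{i,i+k}$ as $k$ runs from $1$ to $n+2$. The diamond rule gives the linear recurrence $d_{k+1}=a_{i+k}d_k-d_{k-1}$, taking $d_0=0$ (the omitted row of zeros).

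Suppose all $a_j\ge 2$. Then $d_{k+1}-d_k\ge d_k-d_{k-1}\ge\dots\ge d_1-d_0=1$, so the sequence $d_k$ is strictly increasing. This contradicts $d_{n+2}=1=d_1$ when $n\ge 1$. This is the key step, and I expect it to be the main obstacle: one must set up the recurrence along a diagonal of the frieze correctly, using the boundary rows of ones (or zeros) so that the endpoints match.

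\textbf{Step 2: cut off the ear and recurse.} Suppose $a_{j}=1$. The neighbours must satisfy $a_{j-1},a_{j+1}\ge 2$ when $n\ge 2$, since two adjacent ones would force a zero entry in the second row. Deleting $a_j$ and decreasing both neighbours by one produces a sequence $(a'_i)$ of length $n+2$.

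We claim that $(a'_i)$ is the quiddity sequence of a positive integral frieze of width $n-1$. Concretely, the new frieze has entries $u'_{k,l}=u_{k,l}$ for $k,l\ne j$. This is the frieze analogue of Ptolemy: $u_{j-1,j+1}=a_j=1$, and removing the vertex $P_j$ of the ``polygon'' keeps the other entries, which still satisfy the diamond rule. One checks this by verifying the new quiddity row, namely $u_{j-2,j+1}=a_{j-1}a_j-1=a_{j-1}-1$ and symmetrically on the other side, and the new last row of ones; the diamond rule then propagates the rest.

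By induction, $(a'_i)$ comes from a triangulation $T'$ of an $(n+2)$-gon. Gluing a triangle $P_{j-1}P_jP_{j+1}$ along the side $P_{j-1}P_{j+1}$ gives a triangulation $T$ whose triangle counts are exactly $(a_i)$. The base case $n=0$ is the single triangle, with all $a_i=1$.

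\textbf{Step 3: uniqueness and the characterisation.} Recall that $u_{j-1,j+1}=1$ exactly when $a_j=1$. A triangulation with the given counts must contain the ear at every vertex $P_j$ with $\alpha_j=1$; removing that ear reduces to a smaller polygon, so by induction the triangulation is unique.

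The diagonals of $T$ have $u_{i,j}=1$ because we specialised $l_k=1$ in Section~\ref{Pt}. Conversely, suppose $u_{i,j}=1$ for a diagonal $P_iP_j$ crossing $m\ge 1$ diagonals of $T$. By Proposition~\ref{laurent} with all variables set to $1$, $u_{i,j}=Q_{i,j}(1,\dots,1)$. This value is at least $2$, because the Ptolemy step $u_{i,j}=u_{i,s}u_{j,t}+u_{s,j}u_{i,t}$ is a sum of two positive integers. Hence the set of pairs with $u_{i,j}=1$ is exactly the set of sides and diagonals of $T$, which gives both the existence and the uniqueness statements, and hence the bijection.
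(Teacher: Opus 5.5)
Your outline is the classical Conway--Coxeter ear-cutting induction. The paper gives no proof of its own and simply cites \cite{CC}. The overall structure is right, but Step 2 has a genuine gap: you treat the quiddity row as a cyclic sequence of length $n+3$ and index entries by vertices of an $(n+3)$-gon. That is, you silently assume the pattern is $(n+3)$-periodic with glide symmetry $u_{i,l}=u_{l,i+n+3}$. A frieze pattern is a priori only a bi-infinite table, and the paper explicitly obtains periodicity and glide symmetry \emph{as consequences} of Theorem~\ref{thm-CC}, so they cannot be used as input.

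They are genuinely needed. Removing a single index $j$ does not give a frieze of width $n-1$: for $k<l$ far from $j$ the distance $l-k$ is unchanged, so the new bottom row would consist of entries of the old last non-trivial row. You must remove all of $j+(n+3)\Z$, which requires $a_{j+m(n+3)}=1$ for all $m$. Even then, the new bottom row contains $u_{j+1,j+n+2}$, which equals $1$ only because it is the glide image of $u_{j-1,j+1}=a_j$.

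Your own recurrence supplies the fix. Append the row of zeros $u_{i,i+n+3}=0$, which the diamond rule forces. Then $u_{i,l+1}+u_{i,l-1}=a_lu_{i,l}$ along diagonals, and its mirror $u_{l+1,m}+u_{l-1,m}=a_lu_{l,m}$ along anti-diagonals, hold up to the zero rows. For $m=i+n+3$, the sequences $l\mapsto u_{i,l}$ and $l\mapsto u_{l,m}$ satisfy the same second-order recurrence and take the same values $1,0$ at $l=m-1,m$. Hence $u_{i,l}=u_{l,i+n+3}$. Alternatively, cite Coxeter's glide-symmetry theorem from \cite{C}.

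Two further steps are asserted rather than proved. First, the recurrence in Step 1 does not follow from a single diamond. Subtracting the diamonds with left entries $u_{i,j-1}$ and $u_{i,j}$ gives $u_{i,j}(u_{i+1,j+1}+u_{i+1,j-1})=u_{i+1,j}(u_{i,j+1}+u_{i,j-1})$. By positivity, $(u_{i,j+1}+u_{i,j-1})/u_{i,j}$ is therefore independent of $i$, and at $i=j-1$ it equals $a_j$.

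Second, in Step 2 the retained entries do not automatically ``still satisfy the diamond rule''. The new diamonds straddling the removed vertex, such as $u_{j-1,l}u_{j+1,l+1}-u_{j+1,l}u_{j-1,l+1}=1$, are Ptolemy relations rather than old diamonds. ``The diamond rule propagates the rest'' presupposes exactly these identities when you identify the propagated table with the old entries. They follow by substituting $u_{j,l}=u_{j-1,l}+u_{j+1,l}$ (the anti-diagonal recurrence with $a_j=1$) into the old diamond $u_{j-1,l}u_{j,l+1}-u_{j,l}u_{j-1,l+1}=1$. With these additions, the induction and your Step 3 go through.
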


In view of the construction in Section~\ref{Pt}, Theorem~\ref{thm-CC} can be understood as follows: every positive integral frieze pattern can be constructed from a triangulated polygon by assigning $1$ to every side and diagonal in the triangulation, and then applying Ptolemy relations.

Theorem~\ref{thm-CC} implies that every frieze pattern of finite width $n$ is {\em periodic}:  it is invariant under a horizontal shift with period $n+3$. Moreover, it is invariant under a glide reflection (whose square is the horizontal shift above).
  
 \begin{remark}
    \label{coeff}
    One can modify the definition of a frieze pattern to allow the entries $u_{i,i+1}$ to be arbitrary real numbers, these are called {\em frieze patterns with coefficients}. To see an example of an integral frieze pattern with coefficients, one can take any subpolygon of a polygon corresponding to a Conway -- Coxeter's frieze pattern with some sides not belonging to the initial triangulation.

    Cuntz, Holm and Jorgensen~\cite[Theorem~2.11]{CHJ} proved that every {\em tame} positive integral frieze pattern with coefficients of width $n$ is glide-symmetric and $(n+3)$-periodic, its entries satisfy Ptolemy relations, and thus every such frieze pattern  can be obtained by specializing the Laurent polynomials from Prop.~\ref{laurent} (where tameness condition is defined as in Definition~\ref{sl-def}). In other words, all such frieze patterns arise from assigning positive integers to sides and diagonals composing a triangulation of an $(n+3)$-gon and applying Ptolemy relations to recover all other entries.  It was also shown in~\cite{CHJ,CH} that such frieze pattern arises from a subpolygon of a polygon corresponding to a Conway -- Coxeter's frieze pattern if and only if certain divisibility conditions hold.

    \end{remark}

\section{Frieze patterns and hyperbolic geometry}
\label{sec-hyp}

In Section~\ref{build} it is described how to build a frieze pattern from a triangulated polygon by repeatedly applying Ptolemy relations. As it was observed in Remark~\ref{no-Eucl}, the entries of constructed frieze patterns do not represent Euclidean lengths of the diagonals in a Euclidean polygon. We will now see that the entries actually have a geometric meaning in terms of hyperbolic geometry. For this, we follow Penner~\cite{P} to introduce {\em lambda lengths}.

\subsection{Penner's lambda lengths and the Ptolemy relation}
\label{lambda}

Let $\H^2$ be a hyperbolic plane. One can think of it as of an upper half-plane $\{z\in \C \mid \mathrm{Im}(z)\ge 0\}$ endowed with the metric  
$ds=\frac{\sqrt{dx^2+dy^2}}{y}$ for $z=x+iy$.  One can also consider a model of hyperbolic plane in the unit disc obtained by application of a M\"obius map to the upper half-plane. In both cases, geodesics are represented by arcs of circles or lines orthogonal to the boundary of the disc or the half-plane. 
A {\it horocycle centered at $X\in \partial \H^2$} is a limit of hyperbolic circles passing through a fixed point with the center tending to $X$. In the two models above, a horocycle is represented by a Euclidean circle containing $X$ and tangent to $\partial\H^2$ (or, if $X=\infty$ in the upper half-plane, a horocycle centered at $X$ is represented by a horizontal line ${\mathrm{Im}}(z)=c$).

\begin{figure}[!h]
  \epsfig{file=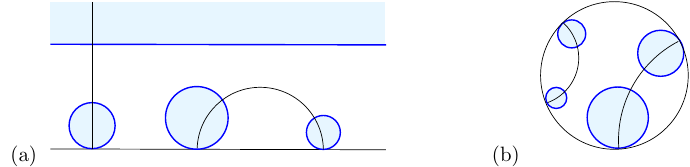,width=0.69\linewidth}  
\caption{Lines and horocycles in the hyperbolic plane:  (a) upper half-plane model; (b) Poincar\'e disc. }
\label{h}
\end{figure}

Although the distance between any two points lying at the boundary of $\H^2$ is infinite, we can measure the distance if we choose some horocycles in advance. Namely, given two points $A,B\in\partial\H^2$, choose a horocycle $h_A$ centered at $A$ and a horocycle $h_B$ centered at $B$. Consider the intersections $A'=AB\cap h_A$ and $B'=AB\cap h_B$ of the hyperbolic line $AB$ with $h_A$ and $h_B$, and let $d$ be the {\em signed distance} between $A'$ and $B'$: 
$$
d=d_{AB}=\begin{cases}
  \text{ the hyperbolic length } |A'B'| \text{ if } h_A\cap h_B=\emptyset;\\
  -|A'B'| \text{ otherwise.}
\end{cases}
$$
The {\it lambda length} $\lambda_{AB}$ (of $AB$ with respect to the horocycles $h_A$ and $h_B$) is defined by
$$
\lambda_{AB}=e^{\frac{d}2}.
$$
By construction, lambda lengths are positive numbers, with $\lambda_{AB}=1$ if and only if the horocycles $ h_A$ and $h_B$ are tangent.

\begin{figure}[!h]
  \epsfig{file=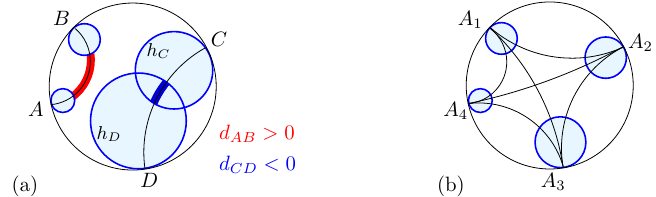,width=0.69\linewidth}  
\caption{ (a) The signed distance $d$ used to define lambda lengths; (b) the hyperbolic Ptolemy relation $\lambda_{13}\lambda_{24}=\lambda_{12}\lambda_{34}+\lambda_{23}\lambda_{14}$, where $\lambda_{ij}=\lambda_{A_iA_j}$. }
\label{fig-lambda}
\end{figure}

It was shown by Penner that lambda lengths also satisfy the Ptolemy relation.

\begin{theorem}[\cite{P}, Proposition~2.6]
  \label{Pt-hyp}
Let $A_1A_2A_3A_4$ be a hyperbolic quadrilateral with $A_1,\dots,A_4\in \partial \H^2$, and let $h_i$ be a horocycle at $A_i$, $i=1,\dots 4$. Let $\lambda_{ij}=\lambda_{A_iA_j}$, $1\le i,j\le 4$ be the lambda lengths with respect to the horocycles $h_i,h_j$.  Then  
$$
\lambda_{13}\lambda_{24}=\lambda_{12}\lambda_{34}+\lambda_{23}\lambda_{14}.
$$

\end{theorem}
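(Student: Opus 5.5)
Our approach is to reduce the hyperbolic Ptolemy relation to a purely algebraic identity by finding an explicit formula for lambda lengths in the upper half-plane model. We work with ideal points $A_i=x_i\in\R$ (moving one point to $\infty$ by a M\"obius map is optional; we avoid it so the formula is symmetric). A horocycle at $x_i$ is a Euclidean circle tangent to $\R$ at $x_i$ with some Euclidean diameter $D_i>0$. The key lemma is
$$
\lambda_{A_iA_j}=\frac{|x_i-x_j|}{\sqrt{D_iD_j}},
$$
up to a universal constant factor, which we fix by checking the tangency case, where $\lambda=1$.

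To prove the lemma, we first use invariance. Lambda lengths are defined by hyperbolic distances, so they are preserved by isometries ($PSL_2(\R)$ acting on the upper half-plane), and isometries carry horocycles to horocycles. We check that the right-hand side is also invariant. Under $z\mapsto az+b$ with $a>0$, both $|x_i-x_j|$ and the $D_k$ scale by $a$, so the expression is unchanged. Under the inversion $z\mapsto -1/z$, we have $|x_i-x_j|\mapsto |x_i-x_j|/(|x_i||x_j|)$, and a direct computation gives $D_i\mapsto D_i/x_i^2$ (the circle through $x_i$ and $x_i+iD_i$ maps to one tangent at $-1/x_i$). These maps generate $PSL_2(\R)$, so it suffices to verify the formula in one normalized configuration. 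We take $x_i=-1$, $x_j=1$ and both horocycles of equal diameter $D$. The geodesic is the unit semicircle, and the horocycles meet it at symmetric points. Computing the hyperbolic distance between these points along the semicircle (or, more simply, sending $x_j$ to $\infty$ and measuring vertical distance via $\log$ of heights) gives $d=2\log(2/D)$. Hence $\lambda=2/D$, which matches $|x_i-x_j|/\sqrt{D_iD_j}=2/D$. Tangent equal horocycles at $\pm1$ have $D=2$, which gives $\lambda=1$ consistently. This normalization and the computation of $d$, including its sign when the horocycles overlap, is the main technical step. The sign takes care of itself, since $e^{d/2}<1$ exactly when $D>2$.

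With the lemma in hand, the factors $\sqrt{D_i}$ appear identically in every term of the Ptolemy relation: each of the three products contains $(D_1D_2D_3D_4)^{-1/2}$. Cancelling this common factor, the theorem reduces to
$$
|x_1-x_3||x_2-x_4|=|x_1-x_2||x_3-x_4|+|x_2-x_3||x_1-x_4|.
$$
Since $A_1A_2A_3A_4$ is a quadrilateral, its vertices appear in cyclic order on $\partial\H^2$. Applying a rotation in the disc model if necessary (equivalently, an isometry), we may assume $x_1<x_2<x_3<x_4$. Writing $a=x_2-x_1$, $b=x_3-x_2$, $c=x_4-x_3$, the identity becomes $(a+b)(b+c)=ac+b(a+b+c)$, which holds by expansion. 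If one vertex is at $\infty$, the same argument applies in a limiting form, or we first apply an isometry to move all four points into $\R$. This completes the plan.
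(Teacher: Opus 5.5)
Your proof is correct. The paper gives no argument for this statement: it quotes it from Penner~\cite{P} and uses it as a black box. So there is nothing internal to compare against, and your proposal supplies a complete, self-contained proof.

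Your key formula $\lambda_{A_iA_j}=|x_i-x_j|/\sqrt{D_iD_j}$ holds exactly, with no undetermined constant. In the configuration $(-1,D;1,D)$, the map $z\mapsto (z+1)/(1-z)$ sends the two horocycles to the line $\mathrm{Im}\,z=2/D$ and to a circle of diameter $D/2$ at $0$. Hence $d=\log(4/D^2)$ in both the disjoint and the overlapping case, as you claim.

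Your route also fits the rest of the paper:
\begin{itemize}
\item For Ford circles ($D=1/q^2$ at $p/q$), your lemma is exactly the formula $\lambda_{p/q,r/s}=|ps-qr|$ that the paper states without proof in Section~\ref{sec-sh}.
\item In general, setting $b_i=D_i^{-1/2}$ and $a_i=x_ib_i$, the lemma reads $\lambda_{ij}=|a_ib_j-a_jb_i|$.
\item Your final identity $(a+b)(b+c)=ac+b(a+b+c)$ is then the three-term Pl\"ucker relation $\Delta_{13}\Delta_{24}=\Delta_{12}\Delta_{34}+\Delta_{14}\Delta_{23}$ for the $2\times 2$ minors of the matrix with columns $(a_i,b_i)$. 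The cyclic order of the points makes all three products positive, so taking absolute values gives Ptolemy.
\end{itemize}
Your argument therefore shows why the hyperbolic Ptolemy relation, the Ford-circle formula and the determinantal formula~(\ref{m}) for $SL_2$-tilings are the same algebra.

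A few details deserve a sentence each when you write this up.
\begin{itemize}
\item Deducing invariance under all of $PSL_2(\R)$ from invariance under the generators requires that no point be sent to $\infty$ along the way. You can arrange this by writing
$$g(z)=\frac{az+b}{cz+d}=\frac ac-\frac{1}{c(cz+d)}$$
as $z\mapsto c^2z+cd$, then $z\mapsto -1/z$, then a translation. The intermediate point $c(cx+d)$ is nonzero exactly when $g(x)\neq\infty$.
\item The reduction to the family $(-1,D;1,D)$ uses the fact that a hyperbolic translation along the geodesic from $-1$ to $1$ makes any decoration symmetric. State this explicitly.
\item A rotation cannot reverse the orientation of the four points, but you need no isometry here. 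The Ptolemy relation is invariant under the dihedral relabellings generated by $1\to2\to3\to4\to1$ and $1\leftrightarrow3$. Hence the absolute-value identity holds for any four real points in cyclic order.
\item You can drop the remark about ``a universal constant factor'', since your normalized computation determines the formula exactly.
\end{itemize}
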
  
Note that the hyperbolic Ptolemy relation holds for any choice of horocycles.

A hyperbolic polygon with all vertices at $\partial\H^2$ is called a {\em ideal polygon}. An ideal polygon with a fixed horocycle at every vertex is called a {\em decorated ideal polygon}. Theorem~\ref{Pt-hyp} can be interpreted as follows: for a decorated ideal polygon, the lambda lengths of its sides and diagonals satisfy the same Ptolemy relations as Euclidean lengths of a cyclic Euclidean polygon.

\subsection{Entries of frieze patterns as lambda lengths}
\label{frieze-hyp}

In Section~\ref{sec-comb}, we saw that every positive integral frieze pattern can be constructed from a triangulated polygon by assigning $1$ to every side and diagonal in the triangulation, and then applying Ptolemy relations.
The following proposition shows that every triangulated polygon can be realized as a decorated ideal hyperbolic polygon with all lambda lengths of sides and diagonals in the triangulation equal to $1$.

\begin{prop}
\label{poly1}
Let $P=P_0\dots P_{n+2}$ be a triangulated polygon. There exists  an ideal decorated hyperbolic polygon $A=A_0\dots A_{n+2}$ triangulated in the same way as $P$, such that all lambda lengths $\lambda_{A_iA_j}$ are equal to $1$.

  \end{prop}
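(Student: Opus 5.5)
The statement should be read as requiring $\lambda_{A_iA_j}=1$ for every side and every diagonal of the triangulation, not for all pairs. I would prove it by induction on the number of triangles, gluing one ideal triangle at a time. Two facts drive the argument. First, any ideal triangle admits a unique decoration making all three lambda lengths equal to $1$. Second, once a decorated edge with lambda length $1$ is fixed, the third vertex and its horocycle are uniquely determined on the chosen side of that edge.

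For the base case I would use the upper half-plane model. Take the triangle with vertices $0,1,\infty$, the horocycle at $\infty$ given by $\mathrm{Im}(z)=1$, and Euclidean circles of diameter $1$ tangent to $\R$ at $0$ and at $1$. These three horocycles are pairwise tangent: the two circles of radius $1/2$ centred at $i/2$ and $1+i/2$ touch each other, and each touches the line $\mathrm{Im}(z)=1$. Since $\lambda=1$ exactly when the horocycles are tangent, all three lambda lengths equal $1$.

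For the inductive step, note that every triangulated polygon with at least two triangles has an ear, that is, a triangle $P_{i-1}P_iP_{i+1}$ whose diagonal $P_{i-1}P_{i+1}$ lies in $T$. Removing the ear leaves a triangulated polygon with one fewer vertex. By induction, this smaller polygon is realized as a decorated ideal polygon in which every edge of its triangulation has lambda length $1$. In particular the edge $A_{i-1}A_{i+1}$ has lambda length $1$, and the horocycles at its endpoints are tangent.

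It remains to attach a new ideal vertex $A_i$ with a horocycle, on the side of $A_{i-1}A_{i+1}$ away from the rest of the polygon, so that the new horocycle is tangent to both existing ones. To do this, apply a Möbius map sending $A_{i-1}\mapsto 0$, $A_{i+1}\mapsto\infty$ and the horocycle at $\infty$ to $\mathrm{Im}(z)=1$. Tangency forces the horocycle at $0$ to have diameter $1$. The remaining vertices of the polygon then lie on one side of $0$ on the real line, say in $\R_{<0}$ after a reflection if needed. Now take $A_i=1$ with the horocycle of diameter $1$ at $1$, exactly as in the base case. This gives an ideal triangle $A_{i-1}A_iA_{i+1}$ with all lambda lengths $1$.

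The main obstacle is convexity and cyclic order, meaning that the new vertex must land in the correct arc of $\partial\H^2$. This holds because $\R_{>0}$ is the arc between $A_{i-1}=0$ and $A_{i+1}=\infty$ that contains no other vertex. So inserting $A_i=1$ preserves the cyclic order $A_0,\dots,A_{n+2}$, and the new triangle does not overlap the existing polygon. Lambda lengths are Möbius-invariant, so the normalization is harmless. The resulting ideal polygon is therefore triangulated in the same way as $P$, with all triangulation lambda lengths equal to $1$.
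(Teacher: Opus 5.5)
Your proof is correct and is essentially the paper's argument: both build the decorated polygon by gluing copies of the ideal triangle with three mutually tangent horocycles one at a time along edges of the triangulation, and your ear-removal induction just fixes the gluing order explicitly. In your normalization the triangle already adjacent to $A_{i-1}A_{i+1}$ is forced to be $-1,0,\infty$. So your new vertex $A_i=1$ with its diameter-$1$ horocycle is exactly the reflection of that triangle across the geodesic $0\infty$, which is the paper's construction.
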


\begin{figure}[!h]
  \epsfig{file=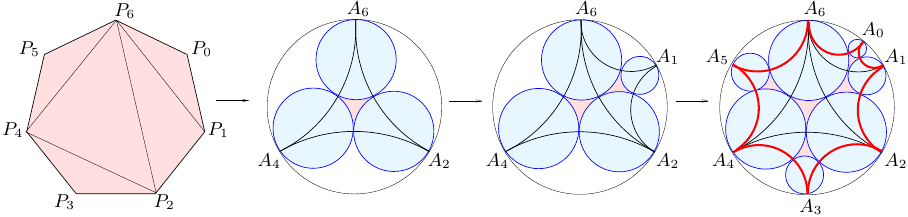,width=0.99\linewidth}  
\caption{Constructing a decorated ideal hyperbolic polygon from a combinatorial triangulated polygon. }
\label{replace}
\end{figure}

\begin{proof}
  The idea of the proof is shown in Fig.~\ref{replace}. We start with an ideal triangle with three mutually tangent horocycles, call its vertices $A_i,A_j,A_k$ (assuming a triangle  $P_iP_jP_k$ belongs to the triangulation $T$ of $P$).  By construction, the lambda lengths of the sides of the decorated ideal triangle  $A_iA_jA_k$ are equal to $1$. Now, reflect the triangle $A_iA_jA_k$ with respect to $A_iA_j$ if $P_iP_j$ belongs to $T$, and denote the new vertex by $A_m$, where $P_iP_j$ is contained in a triangle $P_iP_jP_m$ in $T$. We obtained a triangulated decorated ideal quadrilateral $A_iA_kA_jA_m$ in which, by construction, lambda lengths of all sides and diagonals in the triangulation are equal to $1$.

  In other words, we replace triangles of the triangulation of $P$ with decorated ideal hyperbolic triangles. Continuing this procedure by replacing the triangles of $T$ one by one, we obtain the required polygon. 

\end{proof}

We now want to use the results of this section and Conway -- Coxeter's classification theorem to interpret the entries of any positive integral frieze pattern as lambda lengths of diagonals  of a decorated ideal hyperbolic polygon.

Starting with a positive integral frieze pattern,  Theorem~\ref{thm-CC} provides a unique  triangulated polygon $P$. Proposition~\ref{poly1} then constructs a decorated ideal hyperbolic polygon $A$. Now, observe that lambda lengths of sides and diagonals of $A$ and elements of the frieze pattern satisfy the same rules: the ones corresponding to the sides and the diagonals of the triangulation are equal to $1$, and they all satisfy Ptolemy relations. From this, we conclude that the entry $u_{ij}$ of the initial frieze pattern is precisely the lambda length $\lambda_{A_iA_j}$. In particular, the quiddity row formed by entries $u_{i,i+2}$ consists of lambda lengths $\lambda_{A_iA_{i+2}}$ of short diagonals of the polygon $A$.

\begin{remark}
  \label{in-farey} 
The construction above also follows from  results of Morier-Genoud, Ovsienko and Tabachnikov~\cite[Theorem~1]{MGOT} who proved that positive integral frieze patterns are in bijection with polygons embedded in the Farey graph (up to isometries of the hyperbolic plane). Further development of ideas from~\cite{MGOT} can be found in Section~\ref{sec-sh}. 

  \end{remark}

  \section{$SL_2$-tilings and the Farey graph}
  \label{sec-Farey}
  
  In the remark to their problem (16), Conway and Coxeter suggested to consider {\em infinitely tall}  frieze patterns. Generalization of this brings us to the notion of $SL_2$-tiling
introduced by Assem, Reutenauer and Smith~\cite{ARS}.  

\subsection{$SL_2$-tilings}

\begin{definition}[$SL_2$-tiling]
  \label{sl-def}
An {\it $SL_2$-tiling} is an infinite matrix $(a_{ij})$, $i,j\in \Z$, such that all adjacent $2\times 2$ minors  are equal to $1$. 
An $SL_2$-tiling is {\it tame} if all $3\times 3$ adjacent minors vanish. 

\end{definition}

\begin{example}
  \label{ex-sl2}
Problem (11) of Conway and Coxeter shows that if the positivity assumption is dropped, then any frieze pattern can be extended by two rows of zeroes and negative $1$'s above and below. This gives rise to an elementary example of an  $SL_2$-tiling constructed by ``gluing'' copies of frieze pattern with alternating signs (rotated by $\pi/4$ clockwise), see Fig.~\ref{fig-sl2}(a,b). Another example can be obtained from gluing together the infinite frieze pattern from Problem (16) and its negative, see  Fig.~\ref{fig-sl2}(c,d). 
  
  \end{example}

\begin{figure}[!h]
  \epsfig{file=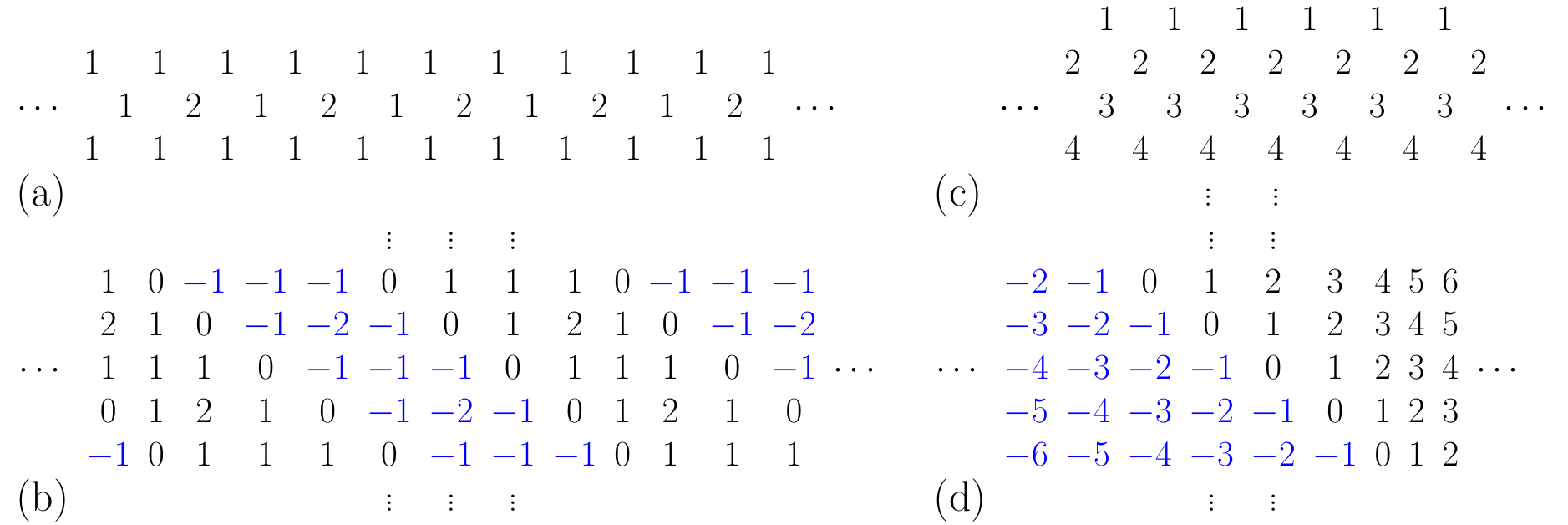,width=1.0\linewidth}  
\caption{Two examples of $SL_2$-tilings. (b) $SL_2$-tiling obtained by gluing of infinitely many copies of a frieze pattern of width $1$ and its negative (cf.~\cite[(11)]{CC}); (d)  $SL_2$-tiling obtained by gluing of an ``infinitely tall'' frieze pattern and its negative (cf.~\cite[(16)]{CC}).}
\label{fig-sl2}
\end{figure}

Notice that both $SL_2$-tilings in the example above are tame. A classification of tame $SL_2$-tilings (following ideas of Bergeron and Reutenauer~\cite{BR} and of Morier-Genoud, Ovsienko and Tabachnikov~\cite{MGOT}, and generalizing earlier results of Bessenrodt, Holm and Jorgensen~\cite{BHJ2}) was given by Short~\cite{Sh} in terms of paths in the {\em Farey graph}.

\subsection{The Farey graph and tame $SL_2$-tilings}
\label{sec-sh}

 \begin{definition}[Farey graph]
The  {\it Farey graph} $\mathcal F$ is the graph with vertices at $\Q\cup\{ \infty\}$, where the two vertices given by reduced fractions $p/q$ and $r/s$ are connected by an edge whenever $|ps-qr|=1$. 
   
 \end{definition}

It is convenient to depict the Farey graph in the upper half-plane model of the hyperbolic plane, drawing the edges as geodesics connecting the corresponding points of the boundary. Then the edges cut the hyperbolic plane into ideal triangles (where two adjacent triangles are obtained from each other by a reflection), see Fig.~\ref{fig-farey}(a).

\begin{figure}[!h]
  \epsfig{file=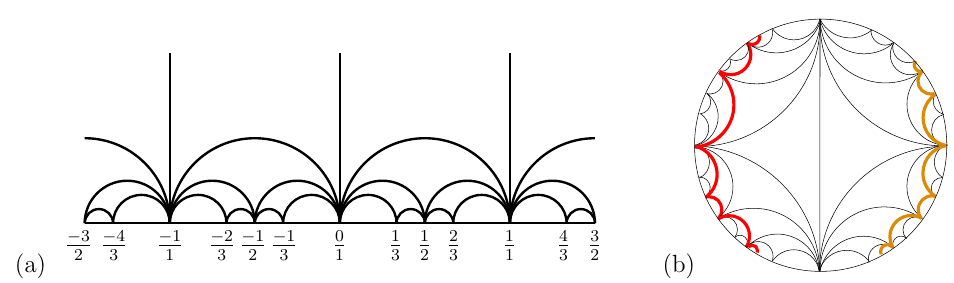,width=0.99\linewidth}  
\caption{ (a) Farey graph in the upper half-plane; (b) same in the disc, with two paths highlighted. }
\label{fig-farey}
\end{figure}

The Farey graph on the upper half-plane can be supplied with the set of horocycles given by {\em Ford circles}: these are circles of diameter $1/q^2$ at every rational point $p/q$ (written in reduced form), see Fig.~\ref{fig-ford}(a). Then one can check that the lambda length between $p/q$ and $r/s$ with respect to Ford circles is given by $$\lambda_{\frac{p}q,\frac{r}s}=|ps-qr|.$$ In particular, the lambda lengths of edges of $\mathcal F$ are equal to $1$, and lambda lengths between any two rational points are integer.

A bi-infinite sequence of reduced fractions $(a_r/b_r)_{r\in \Z}$ represents a {\em path} in the Farey graph $\mathcal F$ if $a_rb_{r+1}-b_ra_{r+1}=1$ for all $r$.
We consider two bi-infinite paths $(a_r/b_r)$ and $(c_r/d_r)$ and define the map  $M: \Z\times\Z \to \Z$ by
 \begin{equation}
 \label{m}  
 m_{i,j}=a_id_j-b_ic_j.
\end{equation}

It turns out that this map provides a bijection between tame $SL_2$-tilings and pairs of paths in $\mathcal F$.

\begin{theorem}[\cite{Sh}, Theorem 1.1]\label{SL2-Farey}
The table $\{m_{i,j} \}$  defined in {\em (\ref{m})} constitutes a tame $SL_2$-tiling. Every tame $SL_2$-tiling can be obtained from a pair of bi-infinite paths as in {\em (\ref{m})} defined uniquely (up to simultaneous action of $SL_2(\Z)$). 

\end{theorem}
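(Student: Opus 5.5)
First I will check that the table $m_{i,j}=a_id_j-b_ic_j$ is a tame $SL_2$-tiling. Put $v_i=(a_i,b_i)^T$ and $w_j=(c_j,d_j)^T$, so that $m_{i,j}=\det(v_i,w_j)$. An adjacent $2\times2$ minor of $M$ is
$$m_{i,j}m_{i+1,j+1}-m_{i,j+1}m_{i+1,j}=\det(v_i,v_{i+1})\det(w_j,w_{j+1}).$$
This is the Binet--Cauchy identity for $2\times 2$ determinants, and I will verify it directly. The path condition $a_rb_{r+1}-b_ra_{r+1}=1$ says $\det(v_r,v_{r+1})=1$, and likewise $\det(w_r,w_{r+1})=1$. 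So every adjacent $2\times2$ minor equals $1$. For tameness, note that each row of $M$ is the linear functional $x\mapsto\det(v_i,x)$ evaluated at the vectors $w_j$. Hence $M$ has rank at most $2$, and every $3\times3$ minor vanishes.

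For the converse, let $(a_{ij})$ be a tame $SL_2$-tiling. I will first show that all its rows lie in a $2$-dimensional space. The adjacent $2\times2$ minors are nonzero, so any two consecutive rows $R_i,R_{i+1}$ are linearly independent when restricted to any two consecutive columns. Tameness gives $\det$ of rows $i,i+1,i+2$ on columns $j,j+1,j+2$ equal to $0$. Because the $2\times2$ minor on rows $i,i+1$, columns $j,j+1$ is nonzero, this forces $R_{i+2}$ restricted to columns $j,j+1,j+2$ to be a combination $\alpha R_i+\beta R_{i+1}$. The coefficients $\alpha,\beta$ are determined already by columns $j,j+1$. Sliding $j$ to $j+1$, the columns $j+1,j+2$ fix the same coefficients, so $\alpha,\beta$ are independent of $j$. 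Consequently $R_{i+2}=\alpha_iR_i+\beta_iR_{i+1}$ on all columns.

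Next I compute the coefficients. Expanding the minor on rows $i+1,i+2$ gives $\alpha_i\cdot(\text{minor of rows }i,i+1)\cdot(-1)$, which shows $\alpha_i=-1$. So the rows satisfy $R_{i+2}=\beta_iR_{i+1}-R_i$, and by symmetry the columns satisfy $C_{j+2}=\gamma_jC_{j+1}-C_j$. The main obstacle is showing that $\beta_i,\gamma_j$ are integers and producing integer vectors. I will take the entries of the tiling to be integers, as Theorem~\ref{SL2-Farey} implicitly assumes, since $m_{i,j}\in\Z$. Then $\beta_i$ is integral: it equals a ratio that can be rewritten as the $2\times 2$ determinant pairing $R_i+R_{i+2}$ against $R_{i+1}$ on two columns. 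Indeed, $\det(R_{i+1},R_{i+2})|_{j,j+1}=1$ is combined with $\det(R_i,R_{i+1})=1$.

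With this in hand I fix two consecutive columns, say $0$ and $1$. I define $w_0=(0,1)^T$ and $w_1=(-1,0)^T$, or a suitable normalisation, and set $v_i=(a_i,b_i)$ so that $\det(v_i,w_0)=a_{i0}$ and $\det(v_i,w_1)=a_{i1}$. This gives $v_i=(a_{i0},a_{i1})$ up to sign, which is an integer vector, and $\det(v_i,v_{i+1})=1$ by the $2\times2$ minor condition. So $(v_i)$ is a Farey path. The fractions are reduced because the determinant with the next vector is $1$. The vectors $w_j$ are then obtained from the column recursion $w_{j+2}=\gamma_jw_{j+1}-w_j$. Every column is determined by columns $0,1$ with the same recursion, and $\det(v_i,\cdot)$ is linear, so $a_{ij}=\det(v_i,w_j)$ for all $j$. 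Finally $\det(w_j,w_{j+1})=1$ follows from the minors again.

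For uniqueness, suppose $\det(v_i,w_j)=\det(v_i',w_j')$ for all $i,j$. Consecutive $v_i,v_{i+1}$ form a basis of $\Z^2$ with determinant $1$. So there is $g\in SL_2(\Z)$ with $gv_0=v_0'$ and $gv_1=v_1'$. The recursions then force $gv_i=v_i'$ for all $i$. The pairing against the basis $v_0,v_1$ determines each $w_j$ uniquely, so $gw_j=w_j'$. The only remaining ambiguity is the sign $-I$, which lies in $SL_2(\Z)$.
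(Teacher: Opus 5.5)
The paper gives no proof of this theorem; it is quoted from Short. Your argument is correct and is essentially the standard one. The factorisation $m_{i,j}=\det(v_i,w_j)$ handles the forward direction. For the converse, the three-term recurrences forced by tameness are the $k=2$ case of the Bergeron--Reutenauer description recalled in the $SL_k$ subsection; your $\beta_i=\det(v_i,v_{i+2})$ is exactly the paper's quiddity $u_{i+1}$.

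One justification is misstated. As written, the $2\times 2$ determinant of $R_i+R_{i+2}$ against $R_{i+1}$ is zero, because $R_i+R_{i+2}=\beta_iR_{i+1}$ makes these rows proportional. Substituting the recurrences into the appropriate minors gives the identities you actually want:
\[
\beta_i=a_{i,j}a_{i+2,j+1}-a_{i,j+1}a_{i+2,j},\qquad
\gamma_j=a_{i,j}a_{i+1,j+2}-a_{i,j+2}a_{i+1,j}.
\]
These show $\beta_i,\gamma_j\in\Z$, and hence that the $w_j$ are integral. Alternatively, you can bypass $\gamma_j$ entirely: $w_j=a_{0,j}v_1-a_{1,j}v_0$ is visibly in $\Z^2$ because $\det(v_0,v_1)=1$.

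Finally, with paths taken as normalised vector sequences, $g$ is uniquely determined by $gv_0=v_0'$ and $gv_1=v_1'$. So your closing remark about a residual $-I$ ambiguity is unnecessary.
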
  

As can be seen from Example~\ref{ex-sl2} (see Fig.~\ref{fig-sl2}(a,b)), every frieze pattern gives rise to an $SL_2$-tiling. Theorem~\ref{SL2-Farey} provides a way to recover Conway -- Coxeter's classification of frieze patterns of width $n$: one needs to replace the  pair of paths by two copies of the same closed non-self-intersecting path of length $n+3$. This path encloses a triangulated polygon from Remark~\ref{in-farey}, and the quiddity sequence of the frieze pattern is precisely the number of triangles in the triangulation counted at every vertex.

\begin{figure}[!h]
  \epsfig{file=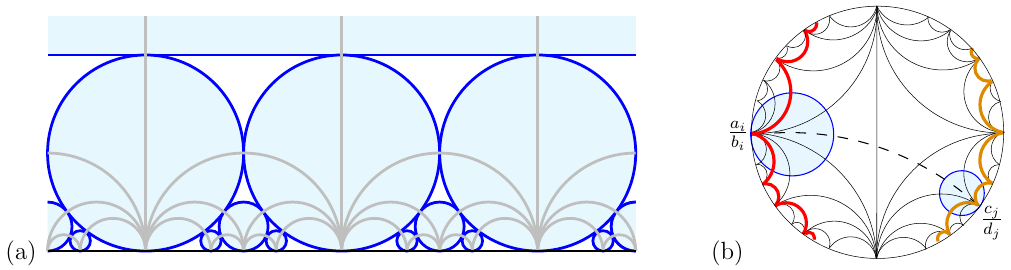,width=0.99\linewidth}  
\caption{ (a) Ford circles as horocycles at the vertices of the Farey graph; (b) construction of an $SL_2$-tiling by using two paths of the Farey graph: $m_{i,j}=|a_id_j-b_ic_j|=\pm\lambda_{\frac{a_i}{b_i},\frac{c_j}{d_j}}$.}
\label{fig-ford}
\end{figure}

\subsection{Infinite frieze patterns}
\label{rem-inf}
A particular case of a tame $SL_2$-tiling is when two paths $(a_r/b_r)$ and $(c_r/d_r)$ coincide, such an  $SL_2$-tiling is then composed of two halves which differ by sign and are separated by a diagonal of zeroes (see Fig.~\ref{fig-sl2}(d) for an example). It is natural to identify a half of it with an {\em infinite frieze pattern}: it contains a row of zeroes, a row of $1$'s, and then satisfies the diamond rule. As before, the first non-trivial row is called the quiddity sequence.

In terms of the path  $(a_r/b_r)$ defining the frieze pattern, the element of the quiddity sequence $\alpha_r$ is the lambda length between  $a_{r-1}/b_{r-1}$ and $a_{r+1}/b_{r+1}$ (up to negative sign). Equivalently, $|\alpha_r|$ is equal to the number of triangles in the Farey graph incident to  $a_{r}/b_{r}$ and lying ``inside'' the path. Theorem~\ref{SL2-Farey} implies that any integer quiddity sequence gives rise to an integer infinite frieze pattern. We will see more on this in Section~\ref{inf}.

\subsection{$SL_k$-tilings}
Bergeron and Reutenauer~\cite{BR} generalised the notion of $SL_2$-tiling as follows. An {\em $SL_k$-tiling} is an infinite matrix of integers with adjacent $k\times k$ minors equal to $1$, with tameness defined by the vanishing of all $(k+1)\times(k+1)$ adjacent minors.

The tameness condition implies that an $SL_k$-tiling is completely defined by its consecutive $k$ rows (or $k$ columns). This was used in~\cite[Proposition~3]{BR} to provide a classification of all tame $SL_k$-tilings: these are in bijection with triples $(A,u_r,v_r)$, where $A\in SL_k(\Z)$ and $u_r$ and $v_r$ are two bi-infinite sequences of vectors in $\Z^{k-1}$ (vectors in $u_r$ and $v_r$ serve as coefficient tuples of linear dependencies).

For $k=2$, $u_r$ and $v_r$ are just bi-infinite sequences of integers which can be interpreted in terms of Theorem~\ref{SL2-Farey} as follows: if an  $SL_2$-tiling is defined by two paths $(a_r/b_r)$ and $(c_r/d_r)$, then $u_r=a_{r-1}b_{r+1}-b_{r-1}a_{r+1}$, and $v_r=c_{r-1}d_{r+1}-d_{r-1}c_{r+1}$. In other words, $|u_r|$ and $|v_r|$ are lambda lengths between the $(r-1)$-st and $(r+1)$-st points of the paths $(a_r/b_r)$ and $(c_r/d_r)$ respectively.
In particular, if an $SL_2$-tiling is constructed from a positive integral frieze pattern (as in Example~\ref{ex-sl2}), both sequences $u_r$ and $v_r$ coincide with the quiddity sequence of the frieze pattern. This shows that the sequences $u_r$ and $v_r$ of vectors in $\Z^{k-1}$ in the classification of $SL_k$-tilings by Bergeron and Reutenauer can also be understood as {\em quiddities} of these tilings.

%\bigskip

By making use of the classification above a counterpart of Theorem~\ref{SL2-Farey} for  $SL_k$-tilings was provided by Peterson and Serhiyenko~\cite{PS}.

A {\it  path} $\gamma=(\gamma_r)_{r\in\Z}$ is a bi-infinite sequence of vectors $\gamma_r\in \Z^k$ with the property that the matrix $(\gamma_r,\dots,\gamma_{r+k-1})$, whose columns are $k$ consecutive entries of $\gamma$, has determinant $1$. Given two paths $\gamma$ and $\delta$,  define a map $M: \Z\times\Z \to \Z$ by
 \begin{equation}
 \label{mm}  
 m_{i,j}=\det(\gamma_i,\dots,\gamma_{i+k-2},\delta_j).
\end{equation}

\begin{theorem}[\cite{PS}, Theorem 1.1]
  The table $\{m_{i,j} \}$  defined in (\ref{mm}) constitutes a tame $SL_k$-tiling. Every tame $SL_k$-tiling can be obtained from a pair of bi-infinite paths as in (\ref{mm}) defined uniquely (up to simultaneous action of $SL_k(\Z)$).

\end{theorem}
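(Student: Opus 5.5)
The argument has two parts: first show that the table defined by the two paths is a tame $SL_k$-tiling, then show that every tame $SL_k$-tiling arises this way, uniquely up to a simultaneous $SL_k(\Z)$ action.

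\emph{Step 1: adjacent $k\times k$ minors.} Fix $i,j$ and consider the adjacent $k\times k$ minor with rows $i,\dots,i+k-1$ and columns $j,\dots,j+k-1$. Each entry $m_{i+s,j+t}=\det(\gamma_{i+s},\dots,\gamma_{i+s+k-2},\delta_{j+t})$ is a linear functional $f_{i+s}$ evaluated at $\delta_{j+t}$. The covector $f_{i+s}$ is, up to sign, the exterior product of $k-1$ consecutive vectors of $\gamma$. Hence the minor factors as
\[
\det\bigl(f_{i+s}(\delta_{j+t})\bigr)_{s,t=0}^{k-1}=\det(F_i)\cdot\det(\delta_j,\dots,\delta_{j+k-1}),
\]
where $F_i$ is the $k\times k$ matrix with rows $f_i,\dots,f_{i+k-1}$. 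The second factor equals $1$ by the path condition on $\delta$.

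The first factor, $\det F_i$, is the determinant of a matrix of $(k-1)$-fold wedge products of consecutive vectors $\gamma_i,\dots,\gamma_{i+2k-2}$. I expect this to be the main obstacle. To handle it I would use that $f_{i+s}$ vanishes on $\gamma_{i+s},\dots,\gamma_{i+s+k-2}$, and evaluate $F_i$ against the basis $\gamma_{i+k-1},\gamma_{i+k},\dots,\gamma_{i+2k-2}$, or rather against a suitable basis of the form $\gamma_{i+k-1-?}$. I would choose the basis $(\gamma_{i+k-1},\dots,\gamma_{i+2k-2})$ so that the pairing matrix $(f_{i+s}(\gamma_{i+k-1+t}))$ becomes triangular. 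Its diagonal entries are then determinants of $k$ consecutive $\gamma$'s, each equal to $1$. Since $\det(\gamma_{i+k-1},\dots,\gamma_{i+2k-2})=1$, this gives $\det F_i=\pm1$. I would fix the sign by direct bookkeeping, or by checking the standard path with $\gamma_r=\delta_r$ built from the standard basis.

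\emph{Step 2: tameness.} Any $k+1$ consecutive vectors $\delta_j,\dots,\delta_{j+k}$ in $\Z^k$ are linearly dependent. Each row of the table is a linear functional applied to $\delta_j$, so any $k+1$ adjacent columns are dependent. Therefore every adjacent $(k+1)\times(k+1)$ minor vanishes.

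\emph{Step 3: surjectivity and uniqueness.} Here I would use the Bergeron--Reutenauer classification recalled above. A tame $SL_k$-tiling is determined by the triple $(A,u_r,v_r)$, where $A$ is a $k\times k$ block and $u_r,v_r$ are the linear recurrence coefficients along rows and columns.

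The column recurrence $v_r$ lets us build $\delta$ recursively. Choose $\delta_0,\dots,\delta_{k-1}$ as the standard basis. Then define $\delta_{r+k}$ by the relation whose last coefficient is forced (to $\pm1$) by the determinant-$1$ condition; this gives a path. Dually, build $\gamma$ from $u_r$, and adjust the initial $k$ vectors of $\gamma$ by solving a linear system so that the block $m_{i,j}$, $0\le i,j<k$, equals $A$. This system is solvable over $\Z$ because $\det A=1$ and $\det F=\pm1$.

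Since both tables are tame and share the same initial block and the same recurrences, they coincide. For uniqueness, $g\in SL_k(\Z)$ acting on both paths preserves all the determinants in (\ref{mm}). Conversely, two pairs of paths giving the same tiling determine the same $k$ consecutive $\delta$'s up to the $SL_k(\Z)$ change of frame, and then the recurrences force agreement everywhere.
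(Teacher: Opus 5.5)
Your Steps 1 and 2 are correct. In Step 1 there is no sign to fix: the matrix $\bigl(f_{i+s}(\gamma_{i+k-1+t})\bigr)_{s,t}$ is upper unitriangular, so $\det F_i=1/\det(\gamma_{i+k-1},\dots,\gamma_{i+2k-2})=1$. The paper does not prove this theorem; it only cites Peterson--Serhiyenko, who build on the Bergeron--Reutenauer classification, as you do in Step 3. There is a genuine gap in Step 3, in the construction of $\gamma$.

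Row $i$ of the table is $j\mapsto f_i(\delta_j)$ with $f_i=\det(\gamma_i,\dots,\gamma_{i+k-2},\,\cdot\,)$, and the $\delta_j$ span $\Z^k$. So the row recurrence $u_r$ is a recurrence among the covectors $f_i$, not among the vectors $\gamma_i$. For $k\ge 3$ these are different recurrences. For $k=3$, if $\gamma_{r+3}=a_r\gamma_{r+2}+b_r\gamma_{r+1}+\gamma_r$, then $f_{r+3}=-b_{r+1}f_{r+2}-a_rf_{r+1}+f_r$. A $\gamma$ built ``from $u_r$'' therefore produces a table whose row recurrence is the dual of $u_r$, and your final comparison (same initial block, same recurrences) breaks down. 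Adjusting the initial frame of $\gamma$ cannot repair this, because the $SL_k(\Z)$ action does not change the recurrence. For $k=2$, $f_i$ is just $\gamma_i$ rotated by a right angle, which is why the naive version looks right.

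The missing ingredient is a duality: every integral ``dual path'' $(f_i)$, i.e.\ integer covectors with $\det(f_i,\dots,f_{i+k-1})=1$, is the wedge path of a unique path $\gamma$. With this, Step 3 becomes short.
\begin{itemize}
\item Take $\delta_0,\dots,\delta_{k-1}$ to be the standard basis and extend $\delta$ by the column recurrence.
\item Put $f_i=(m_{i,0},\dots,m_{i,k-1})$ for all $i$. Then $m_{i,j}=f_i(\delta_j)$ for all $j$ by the column recurrence, and $\det(f_i,\dots,f_{i+k-1})=1$ because it is an adjacent minor of the tiling.
\item Define $\gamma_r\in\Z^k$ by $g(\gamma_r)=(-1)^{k-1}\det(f_{r-k+2},\dots,f_r,g)$ for all covectors $g$. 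Then $f_i$ kills $\gamma_i,\dots,\gamma_{i+k-2}$ and $f_i(\gamma_{i+k-1})=1$.
\item Pairing $\gamma_i,\dots,\gamma_{i+k-1}$ against $f_{i-k+1},\dots,f_i$ gives an upper unitriangular matrix, so $\det(\gamma_i,\dots,\gamma_{i+k-1})=1$. Hence $\det(\gamma_i,\dots,\gamma_{i+k-2},\,\cdot\,)$ and $f_i$ agree on a basis, so they are equal.
\end{itemize}

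The same fact is what your uniqueness sentence needs for $\gamma$. Once $\delta'=g\delta$, one gets $f'_i=f_i\circ g^{-1}$. So $\gamma'_r$ and $g\gamma_r$ both span the line $\bigcap_{i=r-k+2}^{r}\ker f'_i$. Writing $\gamma'_r=\lambda_r g\gamma_r$, the conditions $\prod_{r=i}^{i+k-2}\lambda_r=1=\prod_{r=i}^{i+k-1}\lambda_r$ force all $\lambda_r=1$.
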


\section{p-angulations and other dissections of polygons}
\label{sec-p}

By subdividing a polygon into polygons with various numbers of sides, one can construct Conway -- Coxeter frieze patterns over different fields or generalized frieze patterns. In this section, we discuss two constructions of this type, both related to the {\em Conway -- Coxeter counting}. 

\subsection{Conway -- Coxeter counting}

Conway -- Coxeter's problem (32) provides an easy combinatorial way to start with a triangulated polygon and compute every entry in the corresponding Conway  -- Coxeter's frieze pattern. It works as follows.

Let $T$ be a triangulation and $\{u_{i,j}\}$ be a positive integral frieze pattern given in the notation of  Fig~\ref{uij} (where all $u_{i,j}$ corresponding to  sides of $P$ and diagonals $P_iP_j$ lying in $T$ are set to $1$). To compute $u_{i,j}$ for a given $i$ and all $j= 0,\dots,n+2$ we label the vertex $P_i$ of the polygon $P$ with $0$, and we label all vertices connected to $P_i$ in $T$ with $1$. Then, whenever a triangle has just two labeled vertices (with labels $b$ and $c$), we label the third vertex with $b + c$ and  continue until every vertex is labeled. Now, $u_{i,j}$ is precisely the label of the vertex $P_j$, see Fig.~\ref{fig-count} for an example.

\begin{figure}[!h]
  \epsfig{file=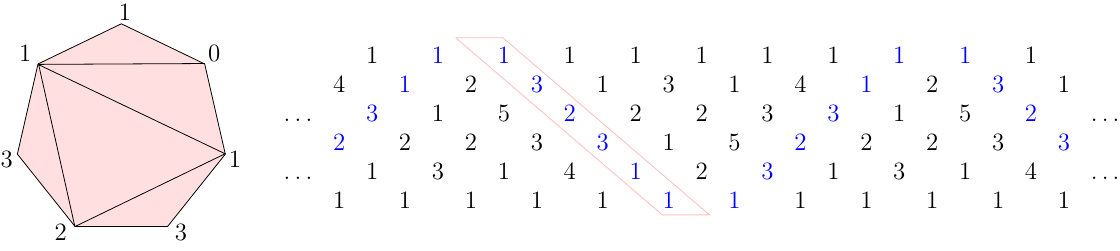,width=0.89\linewidth}  
\caption{Conway -- Coxeter counting. }
\label{fig-count}
\end{figure}

\subsection{$p$-angulations and generalized frieze patterns}

A similar counting was applied by Bessenrodt, Holm and Jorgensen~\cite{BHJ} (see also~\cite{B}) to $p$-angulated polygons (i.e., polygons decomposed into $p$-gons rather than triangles) to construct {\em generalized frieze patterns} of positive integers, where the diamond rule is substituted with $ad-bc$ being equal either to $1$ or to $0$.

More precisely, consider a $p$-angulated $(n+3)$-gon $P$. We are going to associate with it a generalized frieze pattern of width $n$ with entries $u_{i,j}$ corresponding to diagonals $P_iP_j$. To compute $u_{i,j}$ for a given $i$ and all $j= 0,\dots,n+2$, we label the vertex $P_i$ of the polygon $P$ with $0$, and we label all vertices of every $p$-gon containing $P_i$ with $1$. Now, whenever a $p$-gon has just two labeled vertices with labels $b$ and $c$, we label all its other vertices with $b + c$ and  continue until every vertex of $P$ is labeled, see Fig.~\ref{d-ang} for an example. It is shown in~\cite{BHJ} that such generalized frieze patterns inherit many combinatorial properties of Conway -- Coxeter's frieze patterns.

\begin{figure}[!h]
  \epsfig{file=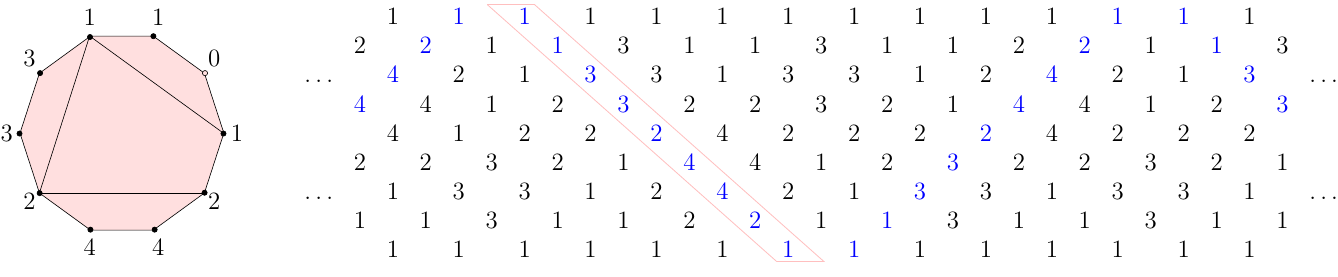,width=0.99\linewidth}  
\caption{Obtaining a generalized frieze pattern from a $p$-angulated polygon by Conway -- Coxeter counting.}
\label{d-ang}
\end{figure}

\subsection{Dissections of polygons and frieze patterns over number fields}

A {\em dissection} of a polygon $P$ is any collection of its mutually non-crossing diagonals. In particular, a triangulation or a $p$-angulation is an example of a dissection. If an $(n+3)$-gon is dissected into $k$ polygons with $p_1,\dots,p_k$ sides respectively, there are several ways (all leading to the same result) to construct a Conway -- Coxeter's frieze pattern of width $n$ with entries belonging to the ring of algebraic integers of the field $K = \Q(w_{p_1}, \dots ,  w_{p_k})$, where $w_p= 2\cos(\pi/p)$.

\subsubsection{Quiddity sequence}
\label{sec-quid}

Let $P$ be an $(n+3)$-gon  dissected into $k$ polygons with $p_1,\dots,p_k$ sides respectively. To every $p_m$-gon in the dissection, assign a {\em weight} $w_{p_m}= 2\cos(\pi/p_m)$. Now, label every vertex $P_i$ of $P$ with the sums of the weights of all polygons in the dissection incident to $P_i$, these labels form a sequence $a_0,\dots,a_{n+2}$, see Fig.~\ref{fig-34} for an example. Defining $a_i=a_{i+(n+3)}$, we obtain a bi-infinite sequence of integers of $K$.

\begin{theorem}[Holm, Jorgensen~\cite{HJ}, Theorem B]
  The bi-infinite sequence $(a_i)$ is a quiddity row of a frieze pattern of width $n$ over the ring of algebraic integers of $K$.
\end{theorem}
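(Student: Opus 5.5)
The plan mirrors the hyperbolic argument of Section~\ref{sec-hyp}, with regular Euclidean $p$-gons playing the role of ideal triangles, combined with the quiddity/matrix criterion for frieze patterns.

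The quiddity criterion is this. A bi-infinite $(n+3)$-periodic sequence $(a_i)$ is the quiddity row of a frieze pattern of width $n$ exactly when the last row of the pattern it generates consists of $1$'s. By the standard continuant description, $u_{i,j}$ is the continuant of $a_{i+1},\dots,a_{j-1}$. Equivalently, with
$$M(a)=\begin{pmatrix} a & -1\\ 1 & 0\end{pmatrix},$$
the condition is
$$M(a_0)M(a_1)\cdots M(a_{n+2})=-I.$$
Given this, the entries are automatically integral over $K$: they are polynomials with integer coefficients in the $a_i$, and each $a_i$ is a sum of the algebraic integers $w_p$. Integrality is therefore free, and everything reduces to proving the product identity.

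For the identity I would induct on the number $k$ of pieces in the dissection.

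In the base case $k=1$, the polygon is an undissected $p$-gon with every $a_i=w_p=2\cos(\pi/p)$. The matrix $M(2\cos\theta)$ has trace $2\cos\theta$ and determinant $1$, so it is conjugate to rotation by $\theta$. With $\theta=\pi/p$, its $p$-th power is rotation by $\pi$, that is, $-I$. Geometrically, this is the statement that the regular $p$-gon closes up.

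For the inductive step, pick an ``ear'' piece: a $p$-gon $Q$ of the dissection attached to the rest $P'$ along a single diagonal $P_sP_t$. Removing $Q$ changes the quiddities in two ways. The $p-2$ interior vertices of $Q$, each with label $w_p$, disappear. The labels at $P_s$ and $P_t$ each drop by $w_p$. So the identity for $P$ follows from the identity for $P'$ plus a local gluing identity of the form
$$M(x+w)\,M(w)^{p-2}\,M(y+w)=M(x)\,M(y)\cdot(\text{sign}),$$
with $w=w_p$. This is the polygon analogue of the gluing step in Proposition~\ref{poly1}.

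To prove the gluing identity, I would use $M(w)^p=-I$ to write $M(w)^{p-2}=-M(w)^{-2}$. Then I would compute $M(x+w)M(w)^{-2}M(y+w)$ directly. The key facts are $M(x+w)=M(x)+wE_{11}$ and
$$M(w)^{-1}=\begin{pmatrix}0&1\\-1&w\end{pmatrix}.$$

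The main obstacle is this $2\times2$ computation together with keeping track of the sign. I expect the result to be $-M(x)M(y)$, which would give exactly the sign needed for the induction to preserve $-I$. This should be checked first against the triangle case $p=3$, $w=1$, where it reduces to the familiar Conway--Coxeter step of cutting off an ear.

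Two further points need care:
\begin{itemize}
\item the cyclic ordering of the product, which can be handled by conjugation, since the identity is invariant under cyclic rotation;
\item the degenerate case where $P'$ is a single polygon, which is already covered by the base case.
\end{itemize}
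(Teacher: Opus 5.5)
Your proposal is correct, but it takes a different route from the one the paper indicates.

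The paper does not reprove Theorem B. Its own justification, in the subsection on lambda lengths, is geometric:
\begin{itemize}
\item each $p$-gon of the dissection is replaced by a regular decorated ideal $p$-gon with tangent horocycles at neighbouring vertices;
\item these pieces are glued into one decorated ideal polygon, and the frieze is read off as its table of lambda lengths;
\item the sides have lambda length $1$, and the diamond rule is Penner's Ptolemy relation in the quadrilaterals $A_iA_{i+1}A_jA_{j+1}$;
\item a Ptolemy computation in the fan of pieces around each vertex identifies the short-diagonal lambda lengths with the $a_i$.
\end{itemize}

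You instead work purely with the criterion $M(a_0)\cdots M(a_{n+2})=-I$ and remove leaves of the dual tree one at a time. This is essentially the algebraic gluing argument for quiddity cycles, closer in spirit to the original source than to this survey. You only need the implication from $-I$ to ``last row of $1$'s'', and that implication holds unconditionally.

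Your sign bookkeeping does close up. Since $M(a+b)=M(a)M(0)^{-1}M(b)$ and $M(0)^{2}=-I$, one gets $M(x+w)M(w)^{-2}M(y+w)=M(x)M(0)^{-2}M(y)=-M(x)M(y)$. Hence $M(x+w)M(w)^{p-2}M(y+w)=M(x)M(y)$, using only $M(w)^p=-I$. For $p=3$ this is the usual ear-removal identity.

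Your route gives a short, self-contained algebraic proof. Integrality over the ring of integers of $K$ is automatic from the continuant formula, and no hyperbolic geometry is needed. It even shows that the gluing works for any weight $w$ with $M(w)^p=-I$ over any commutative ring.

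The paper's route gives more information: positivity of all entries, and a geometric meaning for each entry as a lambda length. The entries inside a single piece are the Chebyshev values $U_{m-1}(\cos(\pi/p))$, which is what underlies the weighted Conway--Coxeter counting. On the other hand, it relies on Penner's Ptolemy relation and on identifying the lambda-length table with the frieze generated by the quiddity row. Integrality is therefore not visible geometrically and has to come from the quiddity-row description you use.
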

It was also shown in~\cite{HJ} that distinct bi-infinite sequences give rise to distinct frieze patterns.

  It is easy to see that if all $p_i=3$, then this procedure recovers Conway -- Coxeter's construction of positive integral frieze patterns described in Section~\ref{quid}. If all $p_i=p$, then the quiddity row is a positive integer sequence multiplied by $w_p$, and it was shown in~\cite[Theorem~A]{HJ} that frieze patterns with such quiddity rows are in bijection with $p$-angulations. 

\begin{figure}[!h]
  \epsfig{file=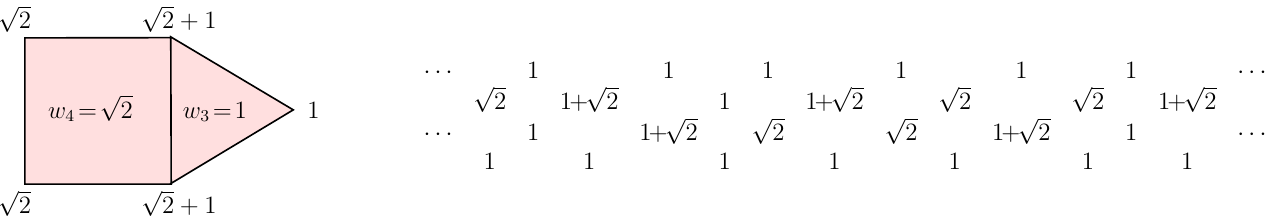,width=0.99\linewidth}  
\caption{Dissection of a pentagon and the corresponding frieze pattern.}
\label{fig-34}
\end{figure}

In particular, if the dissection consists of a unique $p$-gon, one recovers frieze patterns from Conway -- Coxeter's problems (13)--(16).  In other words, entries of such frieze pattern are lengths of diagonals of a regular Euclidean $p$-gon with sides of length $1$.

\begin{remark}
\label{rings}
The ideas above were developed further by Cuntz~\cite{Cu} to study frieze patterns with entries in an arbitrary subset of a commutative ring. The main tool is a decomposition of a frieze pattern into {\em irreducible} ones, where a decomposition corresponds to a dissection of a polygon.  

\end{remark}

\subsubsection{Lambda lengths}
\label{sec-l}

The construction by Holm and Jorgensen described in the previous section can be also interpreted in terms of hyperbolic geometry.

Given a $p$-gon in the dissection of a polygon $P$, we can construct a decorated regular ideal hyperbolic polygon with the same number of sides (here, by a {\em regular $p$-gon} we mean a $p$-gon with maximal symmetry group, i.e., dihedral group $D_{2p}$). There is a unique way to choose horocycles as follows: horocycles at neighboring vertices are tangent, and all the horocycles are invariant with respect to the symmetry group of the $p$-gon. In particular, the lambda lengths of diagonals of this $p$-gon are precisely the lengths of corresponding diagonals of a regular Euclidean $p$-gon with side of length $1$ (this can be easily computed, or one can use the results of Penner, see Remark~\ref{coord}).  

Now, given a dissected polygon $P$, substitute each of the $k$ polygons in the dissection with the corresponding  decorated ideal polygon. This procedure turns $P$ into into a decorated ideal hyperbolic polygon defined uniquely up to the group of hyperbolic isometries. Applying appropriate Ptolemy relations, it is easy to see that the lambda lengths of short diagonals of $P$ are equal to the corresponding entries of the quiddity sequence $(a_i)$ defined in the previous section. Therefore, as discussed in Section~\ref{frieze-hyp}, the lambda lengths of all the diagonals are precisely the entries of the frieze pattern constructed in Section~\ref{sec-quid}.       

\begin{remark}
\label{p-farey}
 It was shown in~\cite{HJ} that if all polygons in the dissection are $p$-gons, then the corresponding frieze pattern can be obtained from a closed path $P_0P_1\dots,P_{n+2}$ in the $p$-counterpart of the Farey graph, which is obtained by reflections of a regular ideal $p$-gon in the hyperbolic plane, see Fig.\ref{quad} for an example in the case $p=4$.

  \end{remark}
\begin{figure}[!h]
  \epsfig{file=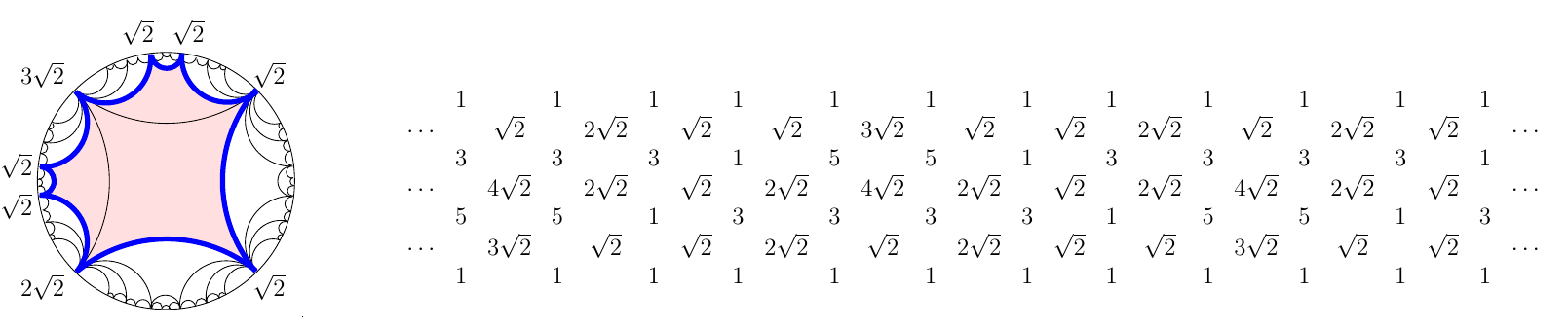,width=0.99\linewidth}  
\caption{A graph obtained by reflections of regular ideal quadrilateral in $\H^2$, a closed path on the graph, and the corresponding frieze pattern.  }
\label{quad}
\end{figure}

  \begin{remark}
    \label{tall}

Following Conway -- Coxeter's remark to problem (16), we can consider the ``limit'' of the constructions above as $p$ tends to infinity, which results in the infinite frieze pattern shown in Fig.~\ref{fig-sl2}(c). This infinite frieze pattern can be obtained by computing lambda lengths between points of an infinite path $\{... ,-k,...,-1,0,1,...,k,....\}$ on a classical Farey graph with respect to Ford circles.

\end{remark}

\subsubsection{Weighted Conway -- Coxeter counting}

As was observed in problem (16) by Conway and Coxeter, the lengths of diagonals of a regular Euclidean $p$-gon with side of length $1$ are equal to $\frac{\sin(m\pi/p)}{\sin(\pi/p)}$ for $m=2,\dots,p-2$. These can be expressed via {\em Chebyshev polynomials of the second kind} defined by $$U_0(x)=1,\quad U_1(x)=2x,\quad U_m(x)=2xU_{m-1}(x)-U_{m-2}(x).$$
Indeed, the length of a short diagonal is equal to $2\cos(\pi/p)$, and for $x=\cos(\pi/p)$ the recurrence relation defining $U_m$ is precisely the Ptolemy relation on diagonals in the $p$-gon. Thus, if a $p$-gon has vertices $P_0,\dots,P_{p-1}$, then the length of $P_0P_m$ is equal to $\frac{\sin(m\pi/p)}{\sin(\pi/p)}=U_{m-1}(\cos(\pi/p))$. 

Now, let $P$ be a dissected polygon. We want to compute the entries $u_{i,j}$ of the frieze pattern for given $i$ and all $j$ corresponding to diagonals $P_iP_j$. As before, assign weight $w_p$ to every $p$-gon in the dissection. For every vertex $P_j$ sharing a $p_k$-gon with $P_i$, label it with $U_m(\cos(\pi/p_k))$ according to the position of $P_j$ in the $p_k$-gon as above. If there are two labeled vertices in a $p_k$-gon  only (with weights $b$ and $c$), then we label the other vertices of this $p_k$-gon as follows: index the vertices $B_0,\dots,B_{p_k-1}$ with $B_0$ labeled $b$ and $B_{p_k-1}$ labeled $c$, and label $B_m$ with $$b\,U_{m-1}(\cos(\pi/p_k))+c\,U_m(\cos(\pi/p_k)).$$   In this way, we can label all vertices of $P$ with the values of $u_{i,j}$.  Note that if the dissection is a triangulation then the procedure above specialises to Conway -- Coxeter counting.

\section{Friezes from surfaces}
\label{surfaces}

The notion of Conway -- Coxeter's frieze pattern can be generalized via substituting triangulated polygons with triangulated marked surfaces.

\subsection{What is a frieze?}
\label{def}

Consider a surface $S$ with or without boundary, and let $M$ be a finite set of marked points on $S$. The marked points may belong to $\partial S$ (we require that every boundary component contains at least one marked point) or to the interior of $S$ (in this case they are called {\em punctures}). We call $(S,M)$ a {\em marked surface}. 

We now want to define equivalence classes of curves with endpoints in $M$. The curves we are interested in are non-self-intersecting, do not pass through other points of $M$, and are considered up to homotopy (relative to $M$). The equivalence classes are called {\em arcs}. 

A {\em triangulation} of $(S,M)$ is a collection of arcs subdividing $S$ into triangles with vertices in $M$.  In the sequel, we will only consider marked surfaces admitting triangulations (i.e., the surface should be ``big enough'': we omit e.g. the sphere or the disc with at most two marked points). Under these assumptions, a triangulation can also be considered as a maximal collection of mutually disjoint arcs (see Fig.~\ref{surf}(a) for an example).

\begin{definition}
Given a marked surface $(S,M)$,  an {\it integral positive frieze on $(S,M)$} (or simply a {\em frieze} for short) is an assignment of positive integers to all arcs on $S$, such that all boundary arcs are assigned $1$, and the Ptolemy relation holds in every quadrilateral embedded into $(S,M)$.
\end{definition}

In particular, taking $(S,M)$ to be a polygon, we obtain precisely Conway--Coxeter's characterization of integral positive frieze patterns. A natural question one can ask is whether a counterpart of Theorem~\ref{thm-CC} holds for friezes on $(S,M)$. This can be split into the following two questions.
\begin{itemize}
 
  \item
    Given a frieze on a marked surface $(S,M)$, is there always a triangulation such that all its arcs are assigned $1$? A frieze possessing such a triangulation is called {\em unitary}.
    \item
  How can one describe unitary and non-unitary friezes?
\end{itemize}

We will address these two questions in the next two sections.

\begin{figure}[!h]
  \epsfig{file=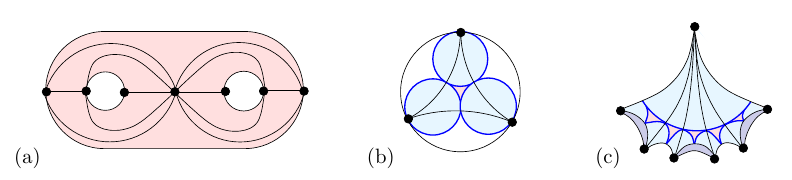,width=0.99\linewidth}  
\caption{ (a) A triangulated marked surface; (b) an ideal triangle with tangent horocycles; (c) sketch of the hyperbolic surface obtained by replacing triangles in (a) by copies of the decorated hyperbolic triangle shown in (b). }
\label{surf}
\end{figure}

\begin{remark}
\label{cluster}
 The definition of a frieze from a marked surface can be considered as a partial case of a frieze from a cluster algebra, where a frieze is defined as a homomorphism from the cluster algebra to $\R$, see~\cite{ARS}. Here, the Ptolemy relations in friezes are precisely exchange relations in cluster algebras. 
  In these settings, while dealing with surfaces with punctures, one needs to consider slightly altered set of arcs (called {\em tagged arcs}) and modified Ptolemy relations (see~\cite{FT}). 
  
  \end{remark}

\subsection{Building unitary friezes}
\label{unit-f}

Actually, unitary friezes are in a one-to-one correspondence with triangulations.
To get an idea about the reasons for that, take a triangulated marked surface, and replace every triangle with a decorated ideal hyperbolic triangle. More precisely, we supply each triangle with (parts of) three mutually tangent horocycles, so they are all copies of the decorated triangle shown in Fig.~\ref{surf}(b).

After this substitution, the surface $S\setminus M$ is endowed with a complete hyperbolic metric of finite area with (a part of) a horocycle around every point of $M$ (see Fig.~\ref{surf}(c)). By construction, the lambda length of every arc of the triangulation is equal to $1$. Due to Theorem~\ref{Pt-hyp}, the lambda lengths of all other arcs can be computed using Ptolemy relations.  As in the case of a polygon (see Section~\ref{Pt}), all lambda lengths turn out to be positive integers. Therefore, the lambda lengths of all arcs form a unitary frieze.

Thus, from every triangulation of $(S,M)$ we can construct a unique unitary frieze. This correspondence is bijective: due to the integrality condition and the Ptolemy relation, arcs of lambda length $1$ cannot intersect each other, so a unitary frieze corresponds to one triangulation only.

\begin{remark}
  \label{coord}
The construction above is an application of a general result by Penner~\cite[Theorem~3.1]{P}: given any triangulation of $(S,M)$ with positive numbers assigned to its arcs, there exists a unique {\em decorated hyperbolic metric} on $(S,M)$ (i.e., a complete hyperbolic metric of finite area on $S\setminus M$ with (a part of) a horocycle around every point of $M$), such that the arcs are geodesic, and the assigned numbers are precisely the lambda lengths.

  \end{remark}

\begin{example}[Triangle with given lambda lengths]
  Given an ideal triangle $A_1A_2A_3$ in $\H^2$ (all ideal triangles are isometric to each other, so we may assume $A_3=\infty$), suppose we want to construct the horocycles $h_1,h_2, h_3$ such that $\lambda_{A_i,A_j}=x_{ij}$, where $x_{ij}\in \R_+$ are given. Consider   a horocycle $h_3$ centered at $\infty$ and given by ${\mathrm{Im\, }} z=t$. Then there exist a unique horocycle $h_1$ centered at $A_1$ such that $\lambda_{A_1,\infty}=x_{13}$ and a unique horocycle $h_2$ centered at $A_2$ such that $\lambda_{A_2,\infty}=x_{23}$, see Fig~\ref{triangle}. We can now measure the lambda length $\lambda_{A_1,A_2}$ as a function of $t$. Notice that $\lambda_{A_1,A_2}(t)$ decreases monotonically from $\infty$ to $0$ as $t$ grows from $0$ to $\infty$, which means that there exists a unique choice of $t$ resulting in $\lambda_{A_1,A_2}=x_{12}$.
\end{example}

\begin{figure}[!h]
  \epsfig{file=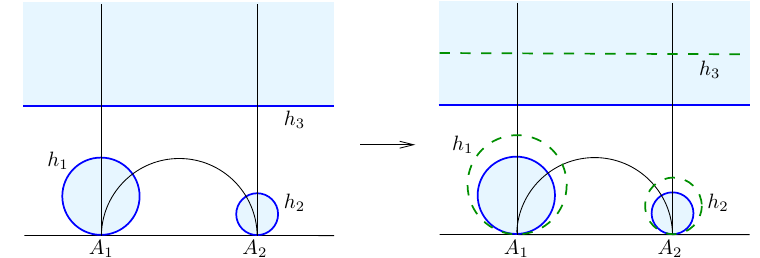,width=0.69\linewidth}  
\caption{Constructing a decorated ideal hyperbolic triangle with prescribed lambda lengths.}
\label{triangle}
\end{figure}

\subsection{Classifying friezes on bordered surfaces}
\label{bordered}

We now look at non-unitary friezes: do they exist, and if yes, can we describe them?

An example of a non-unitary frieze from a once punctured disc  was constructed by Thomas in the appendix to~\cite{BM}, see Fig.~\ref{nonuni}(a). This example was incorporated into a series of examples by Fontaine and Plamondon~\cite{FP}: given a triangulation of a punctured disc, assign to all the arcs incident to the puncture the same value $k$, where $k$ divides the valence of the puncture in the triangulation, and assign $1$ to all other arcs, see Fig.~\ref{nonuni}(b) (in fact, as shown in~\cite{FP}, these exhaust all possible friezes from a punctured disc).

\begin{figure}[!h]
   \epsfig{file=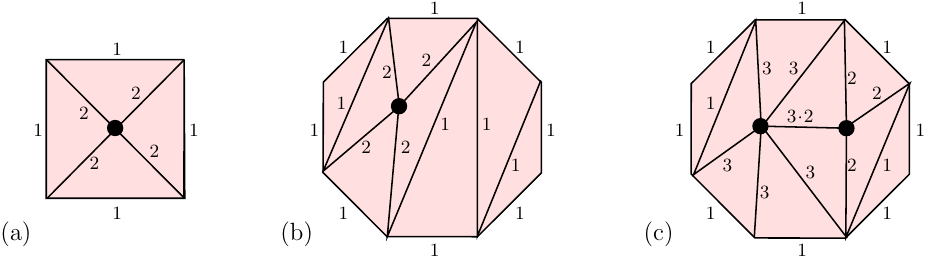,width=0.79\linewidth}
\caption{ Non-unitary friezes on punctured surfaces.}
\label{nonuni}
\end{figure}

The following theorem generalizes the examples of Thomas and Fontaine -- Plamondon, and gives a Conway -- Coxeter style classification of friezes from surfaces with boundary.

\begin{theorem}[\cite{FeTu}]
  \label{unit}
There is a one-to-one correspondence between positive integral friezes from a marked bordered surface $(S,M)$ and triangulations of $(S,M)$ supplied with a collection of positive integers $\{k_i\}$ at all punctures $\{P_i\}$, where each $k_i$ divides the valence of $P_i$ in the triangulation.%, and $k_i^2\le m_i$.

\end{theorem}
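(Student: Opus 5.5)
Given a frieze on a bordered marked surface $(S,M)$, I would first use Penner's theorem (Remark~\ref{coord}) to realize it geometrically. Fix any triangulation, read off the values the frieze assigns to its arcs, and take the resulting decorated hyperbolic metric. By Theorem~\ref{Pt-hyp}, lambda lengths obey the same Ptolemy relations as the frieze. Every arc is reached from the triangulation by flips, so each frieze value equals the lambda length of the corresponding geodesic arc; this is exactly the principle used in Section~\ref{frieze-hyp}. Every boundary arc has lambda length $1$, so the horocycles at the two ends of each boundary segment are tangent.

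The main step is to find a canonical collection of arcs of length $1$. I would argue that two arcs with lambda length $1$ cannot cross. If they crossed, they would form the diagonals of an embedded quadrilateral, and the Ptolemy relation would give $1=ab+cd$ with positive integers $a,b,c,d$, which is impossible. So the set $D$ of all arcs of value $1$, together with the boundary arcs, is a dissection.

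Next I would show that every region of $D$ touching the boundary is a triangle. Suppose a region contains a polygon-type piece with $m\ge4$ boundary-type sides, all of length $1$. Restricting the frieze to the integral positive frieze pattern of that ideal polygon and applying Theorem~\ref{thm-CC} produces an interior diagonal of value $1$, contradicting the maximality of $D$. Regions with topology, such as annuli or pieces containing genus, should be ruled out in the same way. In a non-triangle region I would pick a boundary segment and its neighbours and use the Conway--Coxeter argument that some quiddity entry equals $1$. In the lambda-length language, a horocycle at some vertex has a short diagonal of length $1$, and the integrality of Ptolemy expansions forces such a diagonal to exist. For punctures, the remaining non-triangulated regions should be once-punctured polygons whose arcs to the puncture all share a common value $k$. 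This is the Fontaine--Plamondon situation, where $k>1$ is allowed.

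This local analysis at punctures is the hardest step. I would cut along $D$ so that each puncture $P_i$ sits in a punctured disc whose boundary arcs have length $1$. I would then invoke the Fontaine--Plamondon classification of friezes on a punctured disc~\cite{FP}: every spoke at $P_i$ takes the same value $k_i$, and $k_i$ divides the number of spokes. In hyperbolic terms, the horocycle at $P_i$ is shrunk uniformly, and integrality of the Ptolemy relations across the disc gives the divisibility. Completing the disc by its spokes yields a triangulation $T$ in which $P_i$ has valence $m_i$ with $k_i\mid m_i$.

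For the converse, starting from $(T,\{k_i\})$ with $k_i\mid m_i$, I would assign $1$ to non-spoke arcs and $k_i$ to spokes at $P_i$. Penner's theorem then gives a decorated metric, and I would need to check that all lambda lengths are positive integers. I expect to handle this by an expansion formula, such as the Musiker--Schiffler--Williams snake-graph expansion, or by showing that the frieze equals a unitary frieze rescaled at the horocycle around $P_i$ with all denominators cancelling. The divisibility condition is what makes the division by $k_i$ exact.

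Uniqueness follows because $D$ and the values $k_i$ are determined intrinsically by the frieze. One subtlety remains: when the valence at a puncture is small, the triangulation around it is not unique. I would handle this through tagged arcs or by fixing a convention for the self-folded case.
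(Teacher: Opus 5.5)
Your argument depends on the claim that every region cut out by the value-$1$ arcs $D$ is a triangle or a once-punctured polygon. This claim is never proved, and in the punctured case it is false.

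\textbf{Regions without punctures.} For a disc region, Theorem~\ref{thm-CC} gives a value-$1$ diagonal. For an annulus or a region with genus you fall back on the Conway--Coxeter argument that some short diagonal has value $1$, and that argument cannot work there. It uses finiteness: along a diagonal $u_{i,j+1}=a_ju_{i,j}-u_{i,j-1}$ with $a_j=u_{j-1,j+1}$, so if all $a_j\ge 2$ the entries increase forever and never return to $1$. Along a boundary component of a non-disc region you only get an infinite periodic pattern (Section~\ref{inf}), where nothing forces such a return. In fact the statement your method would prove is false. In the unitary frieze of an annulus triangulated by bridging arcs only, every short diagonal has value equal to the number of triangles at the cut-off vertex, hence at least $2$. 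With one marked point on each boundary component, every arc is bridging. So the value-$1$ arc you need may be a bridging arc, and nothing in your proposal produces it. This is the heart of the theorem, not a routine extension: already ``every frieze on an unpunctured bordered surface is unitary'' generalizes separate results for annuli and pairs of pants~\cite{GS,CFGT}.

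\textbf{Regions with punctures.} Here the structure you aim for is wrong. Take a disc with four boundary points $Q_1,\dots,Q_4$ and two punctures. Let $T$ consist of $P_1P_2$, $P_1Q_1$, $P_1Q_2$, $P_1Q_3$, $P_2Q_3$, $P_2Q_4$, $P_2Q_1$, so both punctures have valence $4$, and put $k_1=k_2=2$. Start from the unitary frieze of $T$ and shrink the horocycles, as described after the theorem. Relative to the unitary plain value, every value is multiplied by $k_j$ for each plain end at $P_j$ and by $k_jL_j=m_j/k_j$ for each notched end. Here $L_j=m_j/k_j^2$ is the length of the shrunk horocycle (each corner at $P_j$ contributes $1/k_j^2$), and notched ends use the Fomin--Thurston conjugate horocycle. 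Arcs outside $T$ already have unitary value at least $2$. Hence the value-$1$ arcs are exactly the arcs of $T$ avoiding $P_1,P_2$. There are none, so $D$ is just the boundary and its only region is the whole twice-punctured disc. Similarly, a loop at such a puncture around the core of an annulus gives a region with topology. So the reduction to Fontaine--Plamondon fails. What has to be shown is that the frieze is a unitary one with values multiplied by $k_i$ at each end at $P_i$, hence by $k_ik_j$ on arcs joining two punctures.

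\textbf{Where $k_i\mid m_i$ comes from.} The same computation shows it. On plain arcs there is no division at all: every $k_i$ gives integers, so your converse needs no snake-graph argument. Division occurs only for notched arcs; for example, a notched spoke of $T$ from $P_i$ to a boundary point has value $m_i/k_i$. In a once-punctured triangle with all spokes equal to $2$, every plain arc is integral but the notched spokes have value $3/2$. Tagged arcs and their exchange relations must therefore be part of the argument from the start, not a convention added for self-folded triangles.
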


\begin{remark}
  Theorem~\ref{unit} implies, in particular, that all friezes from unpunctured surfaces are unitary, and there is a bijective
correspondence between friezes and triangulations (this generalizes earlier results from~\cite{GS,CFGT}).

Also, the number of equivalence classes of friezes from a given bordered surface is finite (where two friezes are equivalent if one can be taken to the other by the mapping class group of $(S,M)$).

\end{remark}

In other words, Theorem~\ref{unit} says that every non-unitary frieze from a surface with boundary can be obtained from an appropriate unitary frieze by multiplying the labels of all arcs incident to  a puncture $P_i$ by an integer $k_i$ for every $i$. See  Fig.~\ref{nonuni}(c) for an example of a non-unitary frieze on a twice punctured disc.

For surfaces without boundary, the same construction leads to non-unitary friezes, but whether they all can be obtained in this way is unknown at the moment.

\begin{remark}
The numbers $k_i$ in the theorem are responsible for the choice of horocycles around the punctures. Namely, taking all $k_i=1$ corresponds to a unitary frieze, while changing $k_i$ substitutes the horocycle around $P_i$ with one lying closer to $P_i$ and thus increases the lambda lengths of all arcs incident to $P_i$. The restrictions on $k_i$ originate from considerations of tagged arcs (cf. Remark~\ref{cluster}).

  \end{remark}

  \subsection{Infinite periodic frieze patterns}
  \label{inf}

  We are now back to considering frieze patterns as tables of numbers. 

  Friezes from surfaces provide an easy way to construct an infinite frieze pattern. Namely, consider a marked surface $(S,M)$ with boundary (different from a polygon), take a triangulation $T$, choose one boundary component, and compute the number of triangles at marked points of this component. This gives a (periodic) quiddity sequence, and thus an infinite (periodic) integral frieze pattern (see Section~\ref{rem-inf}). The entries of this frieze pattern have a geometric meaning: the unitary frieze on $(S,M)$ corresponding to $T$ defines a decorated hyperbolic metric on $(S,M)$, and the entries of the frieze pattern are equal to the lambda lengths of (possibly self-crossing) arcs with endpoints on the boundary component. In particular, the constructed infinite periodic frieze pattern is positive.

    Baur, Parsons and Tschabold~\cite{BPT} showed that all infinite periodic positive integral frieze patterns can be obtained in this way. Moreover, they proved the following.

  \begin{theorem}[\cite{BPT}, Theorem~4.6]
Every infinite periodic positive integral frieze pattern comes from a triangulation of an annulus or a once punctured disc.
    
    \end{theorem}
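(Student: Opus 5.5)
Let $(u_{i,j})$ be an infinite periodic positive integral frieze pattern with quiddity sequence $(\alpha_r)$ of period $N$. By Section~\ref{rem-inf} and Theorem~\ref{SL2-Farey}, it comes from a bi-infinite path $(a_r/b_r)$ in the Farey graph, with $|\alpha_r|$ equal to the number of Farey triangles at $a_r/b_r$ lying on one side of the path. Periodicity of the quiddity sequence gives an element $g\in SL_2(\Z)$ with $g(a_r/b_r)=a_{r+N}/b_{r+N}$ for all $r$. The path is determined by three consecutive vertices together with the quiddities, so the matrix taking $(\gamma_0,\gamma_1)$ to $(\gamma_N,\gamma_{N+1})$ (with $\gamma_r=(a_r,b_r)^T$) does this. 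The plan is to take the quotient of the region of $\H^2$ on the ``positive'' side of the path by $\langle g\rangle$. This gives a surface whose boundary is the image of the path and which carries a triangulation by the induced Farey triangles. Its lambda lengths with respect to Ford circles reproduce the frieze entries, as in Section~\ref{frieze-hyp}.

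\textbf{Step 1: positivity makes the path non-self-intersecting.} A diagonal $u_{i,j}$ equals $\pm\lambda$ between $\gamma_i$ and $\gamma_j$, and it vanishes exactly when they coincide. So positivity of all $u_{i,j}$ with $i<j$ forces all vertices $\gamma_r$ to be distinct. I would then argue that the path turns consistently to one side. Writing $u_{i,j}$ as a continuant in the $\alpha$'s and using the sign pattern $\det(\gamma_i,\gamma_j)>0$ for all $i<j$, the vertices are cyclically ordered on $\partial\H^2$. So the path is a simple curve and it bounds a region $R$ that is a union of Farey triangles. Quiddities $\alpha_r\ge 1$ count the triangles of $R$ at each vertex.

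\textbf{Step 2: classify $g$.} Since $g$ preserves $R$ and acts freely as a shift on its boundary path, it is not elliptic.
\begin{itemize}
\item If $g$ is hyperbolic, its axis separates its two fixed points. The path accumulates at both fixed points, and $R$ lies on one side of the path. Then $R/\langle g\rangle$ is an annulus: one boundary component is the image of the path, and the other comes from the complementary boundary arc of $R$. I have to check that this arc is also a Farey path, i.e.\ that $R$ is bounded there by Farey edges and not by a limit set. Otherwise the quotient would be an annulus with infinitely many triangles, contradicting finiteness of the number of triangles per period.
\item If $g$ is parabolic with fixed point $p$, the path spirals around $p$. Then $R/\langle g\rangle$ is a disc punctured at the image of $p$, and $p$ is a rational point (parabolic elements of $SL_2(\Z)$ fix rationals).
\end{itemize}

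\textbf{Step 3: finiteness of the triangulation and identification of entries.} Each triangle of $R$ touches the path or is separated from it by finitely many triangles. Since a period contains only $\sum_{r=1}^N\alpha_r$ triangle-corners at path vertices, the quotient has finitely many triangles, and we get a genuine triangulated annulus or once punctured disc. Lambda lengths of arcs between boundary points in the quotient equal Ford lambda lengths of lifts, which are $|\det(\gamma_i,\gamma_j)|=u_{i,j}$. So the frieze arises from this triangulation in the sense of Section~\ref{inf}.

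The main obstacle is Steps~1--2: turning positivity of all entries into simplicity and one-sidedness of the Farey path, and showing that $R$ is a union of finitely many $g$-orbits of triangles. In particular I must rule out a hyperbolic $g$ whose far side of $R$ accumulates badly. I would first try an induction on $\sum\alpha_r$: positivity forces some $\alpha_r=1$, or else all $\alpha_r\ge2$, which gives an annulus with no interior triangles at the boundary. Cutting off ``ears'' with $\alpha_r=1$ reduces to that case, mirroring the Conway--Coxeter proof of Theorem~\ref{thm-CC}.
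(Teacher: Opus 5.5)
The survey does not prove this statement; it quotes it from Baur, Parsons and Tschabold. Your argument therefore has to stand on its own, and as written it has a real gap in the hyperbolic case.

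\textbf{What works.} Step~1 is correct, and it does not need continuants. Lift the angles of $\gamma_r$ so that $0<\theta_{r+1}-\theta_r<\pi$. Then $\det(\gamma_i,\gamma_j)>0$ for all $i<j$ forces $0<\theta_j-\theta_i<\pi$, so the points $x_r=a_r/b_r$ are monotone on an arc of $\partial\H^2$. The exclusion of elliptic $g$ is also fine.

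\textbf{The gap: hyperbolic case of Step~2, together with Step~3.} The region $R$ on the positive side of the path is bounded at infinity by the open arc $I\subset\partial\H^2$ between the two fixed points of $g$.
\begin{itemize}
\item $I$ contains rational points, and every Farey triangle incident to such a point lies in $R$.
\item A fundamental interval for $\langle g\rangle$ on $I$ contains infinitely many rationals.
\item Hence $R$ \emph{always} contains infinitely many $\langle g\rangle$-orbits of Farey triangles. So $R/\langle g\rangle$ is an annulus with a funnel end, never a finitely triangulated marked annulus.
\end{itemize}
The check you defer (that the far side of $R$ is a Farey path) therefore always fails. It also yields no contradiction, because the finiteness you appeal to is false. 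The sum $\sum_{r=1}^N\alpha_r$ only bounds the triangles touching the path. The fact that each triangle is separated from the path by finitely many triangles gives no bound per period. In the parabolic case your conclusion does hold, but for a different reason: after moving $p$ to $\infty$, $R$ consists of the triangles at $\infty$ plus the part between your path and the path of integers, and that part has finite area per period.

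\textbf{Fix 1: truncate $R$.} Cut $R$ along a second $g$-invariant Farey path $P'$ with vertices in $I$. One choice is the $I$-side boundary of the strip of Farey triangles crossed by the axis of $g$.
\begin{itemize}
\item A fundamental domain for $g$ on the region between your path and $P'$ is a finite ideal polygon, so comparing areas gives finitely many triangles per period.
\item The region contains all $\alpha_r$ triangles at each $x_r$, since triangles beyond $P'$ have all their vertices in $I$.
\item The two paths have disjoint vertex sets.
\end{itemize}
The quotient is then a triangulated annulus whose quiddity on the outer boundary is $(\alpha_r)$, and the quiddity determines the frieze.

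\textbf{Fix 2: the ear-cutting induction.} The induction in your last paragraph gives a complete and more elementary Conway--Coxeter-style proof, with one case added.
\begin{itemize}
\item If $\alpha_r=1$, then $x_{r-1}x_{r+1}$ is a Farey edge. Deleting the orbit of $x_r$ gives a $g$-invariant path with $N-1$ vertices per period. Its frieze entries are a subset of the old ones, hence still positive.
\item When all $\alpha_r\ge 2$ and some $\alpha_r\ge3$, use the annulus with only bridging arcs and $\sum(\alpha_r-2)\ge 1$ inner marked points.
\item If all $\alpha_r=2$, that sum is $0$, so no annulus works. You need the fan triangulation of the once punctured disc instead; your sketch omits this case.
\item Finally, glue the ears back.
\end{itemize}
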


    \begin{example}
The infinite periodic frieze pattern from problem (16) of Conway and Coxeter (see Fig.~\ref{ex-sl2}) can be constructed from a once punctured disc with one marked point on the boundary (i.e., self-folded triangle).
      
      \end{example}

\section{Other results and directions}

We will conclude by giving a brief list of some  other topics arising from Conway and Coxeter's work and persistently appearing in different contexts: 

\begin{itemize}
\item {\it Difference equations, integrable systems, discrete dynamical systems}, see~\cite{MG17}, \cite{MGOST}.

\item{\it Cluster categories and categorification}, see~\cite{P23}.   

\item{\it Frieze determinants},  see~\cite{BCI}, \cite{BM1}.

\item{\it Frieze patterns and continued fractions}, see~\cite{Sm}.  
  
\end{itemize}

There are also numerous works on various modifications of frieze patterns, including: $2$-friezes, $q$-deformed friezes, symplectic friezes, non-commutative  friezes, Heronian friezes, weak friezes, additive friezes, $Y$-friezes...
See~\cite{MG2} for a detailed review, and

\href{https://www.maths.dur.ac.uk/users/anna.felikson/Projects/frieze/frieze-res.html}{https://www.maths.dur.ac.uk/users/anna.felikson/Projects/frieze/frieze-res.html}

\noindent
for more references.

\end{document}